\newcommand{\N}{\mathbb N}
 \newcommand{\C}{\mathbb C}
\newcommand{\GG}{\mathbf{G}}
\newcommand{\R}{\mathbb R}
\newcommand{\PV}{\textrm{P}(V)}
\newcommand{\PVs}{\textrm{P}(V^*)}
\newcommand{\E}{\mathbb E}
\newcommand{\p}{\mathbb P}
\newcommand{\Z}{\mathbb{Z}}
\newcommand{\too}{\underset{n \to +\infty}{\longrightarrow}}
\newtheorem{prop}{Proposition}[section]
\newtheorem{lemma}[prop]{Lemma}
\newtheorem{theo}[prop]{Theorem}
\newtheorem{remark}[prop]{Remark}
\newtheorem{example}[prop]{Example}
 \title{ The central limit theorem for eigenvalues }
\author{\Large{Richard Aoun}\footnote{
  Richard~Aoun, \textsc{American University of Beirut, Department of Mathematics,  
Faculty of Arts and Sciences, 
 P.O. Box 11-0236 
Riad El Solh,
Beirut 1107 2020, 
LEBANON} 
  \textit{E-mail address}:  \texttt{ra279@aub.edu.lb}}} 
 \date{}
 \newcommand\shorttitle{The central limit theorem for eigenvalues}
 \newcommand\authors{Richard Aoun}
   \ifodd\value{page}
\authors
\shorttitle
\begin{document}
 
 \maketitle
 
   \abstract{ We prove that the spectral radius of a strongly irreducible random walk on $\textrm{GL}_d(\R)$ (or more generally the vector of moduli of eigenvalues of a Zariski-dense random walk on a linear reductive group) satisfies a central limit theorem under an order two moment assumption.}\\\\
   
   \vspace{0.5cm}\noindent  \textbf{Keywords}:  Random matrix products,  Lyapunov exponents, Stationary measures, Central limit theorem\\
   \noindent  \textbf{AMS classification}:  60B15, 60F05, 37H15, 20P05.

 \tableofcontents

 \section{Statement of the results}
Let $V$ be a real vector space of dimension 
  $d\geq 1$ and $||\cdot||$ a norm on $V$. 
For simplicity of notation, the operator norm on $\textrm{End}(V)$ is also denoted by $||\cdot||$.  For every $g\in \textrm{GL}(V)$, we denote by $\rho(g)$ the spectral radius of $g$ and we let  $l(g):=  \max\{\ln^+ ||g|| ,  \ln^+ ||g^{-1}||\}$ with $x^+=\textrm{max}\{x,0\}$. If $\mu$ is a Borel probability measure on $\textrm{GL}(V)$, we say that $\mu$ has moment of order $i\in \N$ if
 $$\int{ l(g)^i \,d\mu(g)}<+\infty.$$
   The right (resp.~ left) random walk at time $n\in \N$ will be denote by $R_n=X_1\cdots X_n$
   (resp.~ $L_n=X_n \cdots X_1$) where $(X_i)_{i\in \N^*}$ is a family of independent and identically distributed $\textrm{GL}(V)$-valued random variables of law $\mu$. All our random variables will be defined on a probability  space $(\Omega, \mathcal{F}, \p)$ and the expectation operator is denoted by $\E$. \\

\noindent  When $\mu$ has a moment of order one, we denote by  $\lambda_1(\mu)$   the top Lyapunov exponent of $\mu$, i.e.~
 \begin{equation}\lambda_1(\mu)=\underset{n\rightarrow +\infty}{\lim} \frac{1}{n}{ \ln   ||L_n|| }.\label{eh}\end{equation}
This convergence holds almost surely and is a simple consequence of Kingman's subadditive ergodic theorem 
(it was first proved by Furstenberg and Kesten \cite{furstenberg-kesten} prior to Kingman's theorem).
 We denote by $\Gamma_{\mu}$ the semi-group generated by the support of $\mu$. 
 We say that $\Gamma_{\mu}$ is strongly irreducible if it does not stabilize a finite union of non-trivial subspaces of $V$.  \\

\noindent The convergence \eqref{eh} can be thought as a law of large numbers for the non-commutative random product $L_n$.
  A corresponding central limit theorem for $\ln ||L_n||$ has been established long ago under the assumption of strong irreducibility of $\Gamma_{\mu}$ and an exponential moment assumption on $\mu$
   (\cite{lepage}, \cite{goldsheid-guivarch}, \cite{bougerol-lacroix}). Recently, Benoist--Quint
   gave in  \cite[Theorem 1.1]{benoist-quint-tcllineaire} another proof of the CLT, which is valid under the optimal moment hypothesis: namely, that  of a moment of order two. 
   Our first main result gives the analogous statement for $\rho(L_n)$, namely

\begin{theo}
 Let $\mu$ be a probability measure on $\textrm{GL}(V)$ such that:
 
\begin{itemize}
\item $\mu$ has a moment of order two,
\item $\Gamma_{\mu}$ is strongly irreducible and has unbounded image in $\textrm{PGL}(V)$. \end{itemize} 
Then there exists $\sigma_{\mu}>0$ such that the following convergence in law  holds: 
 $$ \frac{  \ln \rho (L_n) - n \lambda_1(\mu)}{\sqrt{n}}\underset{n\rightarrow +\infty}{\overset{\mathcal{L}}{\longrightarrow}}
 \mathcal{N}(0, \sigma_\mu).$$
  \label{maintcl}\end{theo}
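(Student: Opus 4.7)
The strategy is to transfer the CLT for $\ln\|L_n\|$, established by Benoist--Quint \cite{benoist-quint-tcllineaire} under the same hypotheses, to $\ln\rho(L_n)$, by showing that the two quantities differ by a random variable which is tight (bounded in probability) and hence negligible after dividing by $\sqrt{n}$. First, I would recall that strong irreducibility together with unbounded image in $\mathrm{PGL}(V)$ implies, by the Guivarc'h--Raugi theorem, that the top Lyapunov exponent is simple: $\lambda_1(\mu) > \lambda_2(\mu)$. Consequently, writing $L_n = K_n A_n K_n'$ in Cartan form with singular values $a_1(L_n)\geq a_2(L_n)\geq\cdots$, the ratio $a_2(L_n)/a_1(L_n)$ decays exponentially, almost surely. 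Setting $U_n := K_n e_1 \in \p(V)$ and $V_n := (K_n')^{-1} e_1 \in \p(V)$ for the dominant singular directions on the image and domain sides, a classical perturbation argument for nearly rank-one matrices gives, as soon as $|\langle U_n,V_n\rangle| \gg a_2(L_n)/a_1(L_n)$, an eigenvalue of $L_n$ of modulus $\approx a_1(L_n)\,|\langle U_n,V_n\rangle|$, hence
\[
\ln \rho(L_n) = \ln \|L_n\| + \ln|\langle U_n, V_n \rangle| + O\!\left(\frac{a_2(L_n)}{a_1(L_n)\,|\langle U_n, V_n \rangle|}\right).
\]

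The second step is to show that the alignment term $\ln|\langle U_n,V_n\rangle|$ is tight. The direction $U_n$ converges almost surely to a random $U_\infty\in\p(V)$ distributed according to the unique $\mu$-stationary measure $\nu$. Dually, $V_n$ converges in law to $V_\infty$ with law the stationary measure $\check\nu$ for the transpose walk on $\p(V^*)$. Under strong irreducibility and proximality, both $\nu$ and $\check\nu$ give zero mass to every projective hyperplane, and in fact satisfy a H\"older-type regularity estimate of the form $\sup_H\nu\{x:d(x,H)<\varepsilon\}\to 0$ as $\varepsilon\to 0$. Combined with a uniform-in-$n$ version for the laws of $(U_n,V_n)$ (obtained through the uniform regularity of iterated convolutions, in the spirit of Guivarc'h), this yields
\[
\lim_{\varepsilon\to 0}\sup_n \p\bigl(|\langle U_n,V_n\rangle|<\varepsilon\bigr)=0,
\]
i.e.\ the sequence $\ln|\langle U_n,V_n\rangle|$ is tight. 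The $O(\cdot)$ error in the formula above is then $o_{\p}(1)$ by combining this with the exponential decay of $a_2/a_1$.

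I would conclude by Slutsky's theorem: decomposing
\[
\frac{\ln\rho(L_n)-n\lambda_1(\mu)}{\sqrt{n}}=\frac{\ln\|L_n\|-n\lambda_1(\mu)}{\sqrt{n}}+\frac{\ln|\langle U_n,V_n\rangle|}{\sqrt{n}}+\frac{\mathrm{error}_n}{\sqrt{n}},
\]
the first term converges in law to $\mathcal{N}(0,\sigma_\mu)$ by Benoist--Quint (and with the \emph{same} variance $\sigma_\mu$ as for $\ln\|L_n\|$), while the remaining terms tend to $0$ in probability.

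The main obstacle I expect is the quantitative, uniform-in-$n$ non-concentration of $\langle U_n,V_n\rangle$, since $U_n$ and $V_n$ are \emph{not} independent --- both are measurable functions of the single product $L_n$. A natural workaround is to split $L_n=L_n^{(2)}L_n^{(1)}$ into two independent blocks of length $\lfloor n/2\rfloor$: by the spectral gap, $U_n$ is exponentially close to the dominant image direction of $L_n^{(2)}$, while $V_n$ is exponentially close to the dominant domain direction of $L_n^{(1)}$, and these two directions are now genuinely independent, allowing a product of non-concentration estimates. A further delicate point is to remain within the optimal second-moment hypothesis, which prevents the use of the Le Page-type exponential large deviation bounds that would otherwise make such coupling arguments much easier.
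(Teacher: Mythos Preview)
Your proposal is essentially the paper's own proof: $|\langle U_n,V_n\rangle|$ is exactly the paper's $\delta(v_{L_n}^+, H_{L_n}^-)$, your perturbation estimate is the content of Lemma~\ref{algebra}, the splitting $L_n = L_n^{(2)} L_n^{(1)}$ into independent halves is precisely the asymptotic-independence trick used to prove Theorem~\ref{key}, the non-concentration input is Lemma~\ref{p0}, and the conclusion via Benoist--Quint plus Slutsky is verbatim the paper's. Two small corrections: (i) the paper only proves (and only needs) the $\limsup_n$ version of your non-concentration claim, not $\sup_n$; (ii) Guivarc'h--Raugi's simplicity theorem requires \emph{proximality}, which does not follow from strong irreducibility plus unboundedness in $\mathrm{PGL}(V)$ alone (e.g.\ $\mathrm{GL}_2(\C)\hookrightarrow\mathrm{GL}_4(\R)$ has proximality index $2$ and $\lambda_1=\lambda_2$), so the paper first reduces to a strongly irreducible and proximal representation via a wedge-power construction (inequality~\eqref{toprox}) --- you should insert the same step.
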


\noindent   This theorem is established in  \cite[Theorem 13.22]{benoist-quint-book} under a more restrictive exponential moment condition.
 The  main contribution of this note is to establish it under the optimal order two moment assumption. 
 
   \begin{remark} It will follow from the proof that the limit distribution $\mathcal{N}(0,\sigma_{\mu})$ is the same as the limiting distribution  of $\frac{\ln ||L_n|| - n \lambda_1}{\sqrt{n}}$. 
    \label{oop}\end{remark}
  
 \begin{remark}  As we show in 
  Example \ref{notconv},       the sequence of random variables 
 $ \frac{\ln \rho (L_n) - n \lambda_1(\mu)}{\sqrt{n}}$ may fail to converge in distribution if $\Gamma_{\mu}$ is not assumed to be strongly irreducible. 
  Note that even if it exists, the limit  distribution is not necessarily Gaussian as one can see by considering a random walk on diagonal matrices.  It is worth mentioning that, on the contrary,  $\ln \rho(L_n)$ satisfies  always a law of large numbers  (without any algebraic assumption on the support of $\mu$) 
  as is recently shown in \cite{aoun-sert}. \label{rem-strongly}\end{remark}

  \vspace{0.2cm}
 In view of the known CLT for   $\ln ||L_n||$,   the proof of Theorem \ref{maintcl} reduces to proving that $\frac{1}{\sqrt{n}} \ln\frac{\rho(L_n)}{||L_n||}$ converges in probability to zero when $\mu$ has a moment of order two. We will actually give   estimates of the  ratio $\frac{\rho(L_n)}{||L_n||}$ with only the assumption of a
  moment of order one  for $\mu$.  The main technical result of this note is therefore the following:\begin{theo}
Let $\mu$ be a probability measure on $\textrm{GL}(V)$ such that: 
\begin{itemize}
\item $\mu$ has a moment of order one,
\item $\Gamma_{\mu}$ is strongly irreducible. \end{itemize}
Then
\begin{equation}\limsup_{n \to +\infty} \p\left(\frac{\rho(L_n)}{||L_n||} \leq \epsilon\right) \underset{\epsilon \to 0}{\big\downarrow} 0.\label{ratioe}\end{equation}
Equivalently, 
  for every numerical sequence $(\epsilon_n)_{n\in \N^*}$ that tends to zero, 
 \begin{equation}\p \left( \frac{\rho(L_n)}{||L_n||} \leq \epsilon_n\right) \too 0.\label{equamain}\end{equation}
\label{main}\end{theo}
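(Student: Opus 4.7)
My approach is to derive a pointwise geometric lower bound on $\rho(g)/||g||$ in terms of the Cartan decomposition of $g$, and then to control each ingredient probabilistically via the classical theory of products of random matrices.

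The first step is a trace computation. Writing $g=k_g a_g \ell_g$ with $k_g,\ell_g$ orthogonal and $a_g=\mathrm{diag}(\sigma_1(g),\ldots,\sigma_d(g))$, and denoting by $V_g^{(i)}=\ell_g^{-1}e_i$, $U_g^{(i)}=k_g e_i$ the $i$-th singular directions in source and target (with $V_g^+=V_g^{(1)}$, $U_g^+=U_g^{(1)}$), one checks directly that
$$\mathrm{tr}(g)=\sum_{i=1}^{d}\sigma_i(g)\,\langle V_g^{(i)},U_g^{(i)}\rangle.$$
Combined with the elementary bound $\rho(g)\geq |\mathrm{tr}(g)|/d$, this yields the key inequality
$$\frac{\rho(g)}{||g||}\;\geq\;\frac{1}{d}\bigl|\langle V_g^+,U_g^+\rangle\bigr|\;-\;\frac{d-1}{d}\cdot\frac{\sigma_2(g)}{\sigma_1(g)}.$$
Applying this with $g=L_n$ reduces the problem to: (a) $\sigma_2(L_n)/\sigma_1(L_n)\to 0$ in probability, and (b) $|\langle V_{L_n}^+,U_{L_n}^+\rangle|$ does not concentrate at $0$.

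For (b) I would use past/future decoupling: split $L_{2n}=L_{2n}^{(2)}L_{2n}^{(1)}$ into the \emph{independent} halves $L_{2n}^{(1)}=X_n\cdots X_1$ and $L_{2n}^{(2)}=X_{2n}\cdots X_{n+1}$. Under strong irreducibility together with the existence of a proximal element in $\Gamma_\mu$, the qualitative projective contraction theorem of Furstenberg (valid under a first moment) gives that, in probability as $n\to+\infty$, $U_{L_{2n}}^+$ is close to $U_{L_{2n}^{(2)}}^+$ and $V_{L_{2n}}^+$ is close to $V_{L_{2n}^{(1)}}^+$. By independence of the two halves, $(V_{L_{2n}}^+,U_{L_{2n}}^+)$ is then asymptotically independent, each marginal converging to a stationary measure on $\PV$ for the associated forward (resp.\ backward) projective random walk. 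Strong irreducibility forces each such stationary measure to charge no proper projective subspace, so $|\langle V_{L_{2n}}^+,U_{L_{2n}}^+\rangle|$ is bounded away from $0$ with overwhelming probability. Point (a) is simplicity of the top Lyapunov exponent, a consequence of Goldsheid--Margulis.

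I expect two main obstacles. First, strong irreducibility alone does \emph{not} force $\Gamma_\mu$ to contain a proximal element: if the multiplicity $k$ of $\lambda_1$ in the Lyapunov spectrum is $>1$, then $\sigma_2/\sigma_1$ need not decay and the inequality above degenerates. I would handle this by passing to the $k$-th exterior power $\wedge^k V$, where the induced action factorizes through a proximal action on an orbit in $\mathrm{Gr}_k(V)$; applying the scheme above there yields control of the product $|\lambda_1(L_n)\cdots\lambda_k(L_n)|/(\sigma_1(L_n)\cdots\sigma_k(L_n))$, from which a lower bound on $\rho(L_n)/||L_n||$ is recovered via Weyl's inequalities and the fact that the top $k$ singular values of $L_n$ coincide up to subexponential factors. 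Second, since the theorem requires only a first moment, all convergence statements must avoid the exponential-moment quantitative rates used in earlier treatments; one must rely instead on the qualitative Furstenberg-type projective convergence theorems, which need only $\int l(g)\,d\mu(g)<+\infty$.
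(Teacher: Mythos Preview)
Your architecture matches the paper's: reduce to the strongly irreducible \emph{and proximal} case, establish a deterministic lower bound on $\rho(g)/||g||$ in terms of the separation between the attracting direction and the repelling hyperplane, then control that separation probabilistically by splitting $L_n$ into independent halves and invoking non-degeneracy of the stationary measures. Since $|\langle V_g^+,U_g^+\rangle|=\delta(v_g^+,H_g^-)$ in the paper's notation, your step~(b) is exactly the paper's Theorem~\ref{key}. The one genuine difference is the geometric input: you use the elementary trace identity $\rho(g)\ge|\mathrm{tr}(g)|/d$ to obtain $\rho(g)/||g||\ge d^{-1}\delta(v_g^+,H_g^-)-(d-1)d^{-1}a_2(g)/a_1(g)$, whereas the paper's Lemma~\ref{algebra} uses a Banach fixed-point argument to get $\rho(g)/||g||\ge\tfrac12\delta(v_g^+,H_g^-)$ once $\delta(v_g^+,H_g^-)>2\sqrt{a_2/a_1}$. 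Your inequality is more elementary; the paper's has the bonus conclusion that $g$ is proximal. Two small points on (b): what is ultimately needed is the \emph{uniform} estimate $\sup_H\nu\{x:\delta(x,H)\le t\}\to0$ (the paper's Lemma~\ref{p0}), not merely $\nu(H)=0$; and simplicity of the top exponent under strong irreducibility plus proximality is Guivarc'h--Raugi, not Goldsheid--Margulis.

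Your reduction to the proximal case has a gap. Saying that the top $k$ singular values ``coincide up to subexponential factors'' only gives $\frac{1}{n}\log\bigl(\sigma_1^k/(\sigma_1\cdots\sigma_k)\bigr)\to0$, which does not suffice to transfer the tightness statement \eqref{ratioe} from $\wedge^k L_n$ back to $L_n$: an unbounded correction could absorb the gain. The paper closes this by a \emph{deterministic} algebraic fact (\cite[Lemma 4.13]{benoist-quint-tcllineaire}): if $p$ is the proximality index of $\Gamma_\mu$, there is a $\Gamma_\mu$-invariant subspace $W\subset\wedge^p V$ on which the action is strongly irreducible and proximal, with $\sup_{g\in\Gamma_\mu}||g||^p/||\pi(g)||<\infty$. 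Equivalently, the very definition of $p$ forces $\inf_{g\in\Gamma_\mu}a_p(g)/a_1(g)>0$, since otherwise a subsequence of $g/||g||$ would converge to a rank~$<p$ endomorphism. Under strong irreducibility your $k$ coincides with $p$ (Guivarc'h--Raugi), so once you replace ``subexponential'' by this uniform bound, your recovery step via $\rho(L_n)^p\ge\rho(\wedge^p L_n)$ goes through. You should also be explicit that $\Gamma_\mu$ need not act irreducibly on all of $\wedge^p V$; it is the restriction to $W$ (not a vague ``orbit in $\mathrm{Gr}_k(V)$'') that carries the required i-p structure.
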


\begin{remark}The speed of convergence  when $\epsilon\to 0$ in \eqref{ratioe} depends on the regularity of the unique stationary probability measure   on the projective space of some strongly irreducible and proximal representation. This is formulated   in  Theorem \ref{mainprecise} which is  a more precise statement than the one given in Theorem \ref{main}.  \end{remark}
In a similar fashion,  using the various wedge power representations of $\textrm{SL}_d(\R)$, 
 we can deduce easily from Theorem \ref{main} and from Benoist--Quint's central limit for the Cartan projection a CLT for the full vector of eigenvalues. Namely:

\begin{theo}
 Let $\mu$ be a probability measure on $\textrm{SL}_d(\R)$ with a moment of order $2$. Assume that its support generates a 
 Zariski-dense subgroup. Then there is a positive definite quadratic form $K_\mu$ on the hyperplane $\{(x_1, \cdots, x_d)\in \R^d; \sum_1^d x_i = 0\} $ such that the random vector 
 $$\left(\rho_1(L_n) - n \lambda_1(\mu), \cdots, \rho_d(L_n) - n \lambda_d(\mu)\right)/\sqrt{n} $$
 converges in law to the multidimensional gaussian centered gaussian distribution $\mathcal{N}(0,K_\mu)$. Here  $\rho_1(L_n) \geq  \cdots \geq \rho_d(L_n)>0$ denote  the moduli of the eigenvalues of $L_n$
  in decreasing order, and $\lambda_i(\mu):=\underset{n\rightarrow +\infty}{\lim}\frac{1}{n}\ln\frac{||\bigwedge ^ i L_n||}{||\bigwedge ^ {i-1} L_n||}$ is the $i$-th Lyapunov exponent.
\label{maingen}\end{theo}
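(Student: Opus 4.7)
The plan is to reduce Theorem~\ref{maingen} to the already-known multidimensional central limit theorem of Benoist--Quint for the Cartan projection, using Theorem~\ref{main} above applied to the various exterior power representations of $\textrm{SL}_d(\R)$. For $g \in \textrm{SL}_d(\R)$ with singular values $\sigma_1(g) \geq \cdots \geq \sigma_d(g)$, write $\kappa_j(g):= \ln \sigma_j(g)$, so that $\kappa(g) = (\kappa_1(g),\ldots,\kappa_d(g))$ is the Cartan projection. The starting point is the pair of identities
\[
\ln \bigl\|\bigwedge\nolimits^i g\bigr\| = \sum_{j=1}^i \kappa_j(g), \qquad \ln \rho\bigl(\bigwedge\nolimits^i g\bigr) = \sum_{j=1}^i \ln \rho_j(g),
\]
valid for every $i \in \{1, \ldots, d\}$.

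For each $i \in \{1,\ldots, d-1\}$, I would apply Theorem~\ref{main} to the pushforward measure $(\bigwedge^i)_*\mu$ on $\textrm{GL}(\bigwedge^i V)$. Its moment of order one is controlled by the bounds $\|\bigwedge^i g\| \leq \|g\|^i$ and $\|\bigwedge^i g^{-1}\| \leq \|g^{-1}\|^i$, so it holds since $\mu$ has a moment of order two. Strong irreducibility of $\bigwedge^i \Gamma_\mu$ on $\bigwedge^i V$ follows from the Zariski-density of $\Gamma_\mu$ in the connected algebraic group $\textrm{SL}_d(\R)$ together with the (algebraic) irreducibility of the $i$-th exterior-power representation. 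Theorem~\ref{main} then yields
\[
\frac{1}{\sqrt{n}}\,\ln\frac{\rho(\bigwedge^i L_n)}{\|\bigwedge^i L_n\|} \;=\; \frac{1}{\sqrt{n}}\sum_{j=1}^i \bigl(\ln \rho_j(L_n) - \kappa_j(L_n)\bigr) \;\longrightarrow\; 0 \quad \text{in probability.}
\]
Taking successive differences in $i$, and using that both sides vanish for $i=d$ (since $\det L_n=1$), gives $\frac{1}{\sqrt{n}}\bigl(\ln \rho_i(L_n) - \kappa_i(L_n)\bigr) \to 0$ in probability for every $i \in \{1,\ldots,d\}$.

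The final step is to invoke the Benoist--Quint central limit theorem for the full Cartan projection from \cite{benoist-quint-tcllineaire}, which, under Zariski-density and moment two, asserts that $\bigl(\kappa(L_n) - n\vec\lambda(\mu)\bigr)/\sqrt{n}$ converges in law to a nondegenerate centered Gaussian $\mathcal{N}(0, K_\mu)$ on the hyperplane $\{\sum_i x_i = 0\}$. Since the correction vector $\bigl(\ln \rho_i(L_n) - \kappa_i(L_n)\bigr)_i/\sqrt{n}$ converges to zero in probability componentwise, hence jointly, the standard principle that adding a sequence tending to zero in probability preserves convergence in law (Slutsky's lemma) upgrades this to the desired joint convergence of $\bigl(\ln \rho_i(L_n) - n\lambda_i(\mu)\bigr)_{i=1}^d / \sqrt{n}$ to the same limit $\mathcal{N}(0, K_\mu)$.

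The main obstacle in this plan is not analytic but algebraic: one has to transfer Zariski-density of $\Gamma_\mu$ in $\textrm{SL}_d(\R)$ to strong irreducibility of $\bigwedge^i \Gamma_\mu$ on $\bigwedge^i V$ for every $i$. This reduces to the standard fact that a Zariski-dense subgroup of a connected algebraic group inherits strong irreducibility from the algebraic irreducibility of any rational representation of the ambient group. Once this input is in place, the remainder is an essentially formal combination of Theorem~\ref{main}, telescoping in $i$, and Slutsky's lemma, which is precisely why the author describes the deduction as ``easy''.
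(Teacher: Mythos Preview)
Your proposal is correct and follows essentially the same route as the paper: apply Theorem~\ref{main} to each exterior-power representation $\bigwedge^i V$ (whose strong irreducibility comes from Zariski-density of $\Gamma_\mu$ in the connected group $\textrm{SL}_d(\R)$), use the identities $\ln\|\bigwedge^i g\|=\sum_{j\le i}\kappa_j(g)$ and $\ln\rho(\bigwedge^i g)=\sum_{j\le i}\ln\rho_j(g)$ to conclude that $(\ln\rho_i(L_n)-\kappa_i(L_n))/\sqrt{n}\to 0$ in probability, and then combine Benoist--Quint's CLT for the Cartan projection with Slutsky's lemma. The paper phrases this in the more abstract language of fundamental weights and the Cartan/Jordan projections (Theorem~\ref{reductive}), but for $\textrm{SL}_d(\R)$ this specializes exactly to your argument.
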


Recall that a subgroup of $ \textrm{SL}_d(\R)$ is called Zariski-dense if it is contained in no proper real algebraic subgroup of $\textrm{SL}_d(\R)$. Theorem \ref{maingen} will be proved in a more general setting (see Theorem \ref{reductive}), that of random walks on Zariski dense sub-semigroups of  reductive groups.  \begin{remark} 
Under the assumption of Theorem \ref{maingen},  we have  $\lambda_1(\mu)>\cdots >\lambda_d(\mu)$ as it follows from the combination of 
Guivarc'h--Raugi's theorem \cite{guivarch-raugi} and Goldsheid--Margulis's one \cite{goldsheid-margulis} (see also Benoist--Labourie \cite{benoist-labourie} and Prasad \cite{prasad-rapinchuk}). In particular, by Theorem \ref{maingen},  all the eigenvalues of $L_n$ are real with a probability tending to one.  
 \end{remark}
  \begin{remark} 
  Theorems \ref{main}, \ref{maintcl}  and \ref{maingen} are also valid for $R_n$ verbatim, since $R_n$ and $L_n$ have the same law for every $n$.
 \end{remark}
 
   \begin{remark}
  When the field $\R$ is replaced by another local field (as $\C$ or a p-adic field for instance), Theorem \ref{main} remains true verbatim with the same proof. The limiting distributions provided in Theorem \ref{maintcl} and \ref{maingen} exist, remain Gaussian but can be however degenerate.  See \cite[Example 12.21]{benoist-quint-book} for more details.\end{remark}
  
  \section*{Acknowledgements}
\noindent The author has the    pleasure to thank Emmanuel Breuillard and {\c Ca$\check{\text{g}}$r\i }              
Sert  for fruitful discussions. 
Part of this project was sponsored by the Center of Advanced Mathematical Sciences (CAMS). 
 
\section{Preliminary reduction}\label{prel}
\noindent In this section, we reduce the proofs of Theorem \ref{main} and   Theorem \ref{maintcl} to  Theorem \ref{key} below which says essentially that the    attracting point of $L_n$ is fairly far from its  repelling hyperplane.\\

\noindent First, we introduce some notation.   We set $V=\R^d$. Let $\textrm{P}(V)$ be the projective space of $V$. For every non zero vector $v$ (resp.~ non zero subspace $E$) of $V$, we denote by $[v]=\R v$ (resp.~$[E]$) its projection to $\textrm{P}(V)$. The action of $g\in \textrm{GL}(V)$ on a vector $v$ will be simply denoted by $g v$, while the action of $g$ on a point $x \in \textrm{P}(V)$ will be denoted by $g\cdot x$. \\

 \noindent We endow $V$ with the canonical basis $(e_1, \cdots, e_d)$  and the usual Euclidean dot product and norm.    Let $K=O_d(\R)$ be the orthogonal group.   Denote by  $A\subset \textrm{GL}_d(\R)$ the subgroup of diagonal matrices and $A^+ \subset A$ the sub-semigroup made of matrices with positive entries and arranged in decreasing order. The KAK decomposition (or the singular value decomposition) states that   $\textrm{GL}_d(\R)=K A^+ K$.    For every $g\in \textrm{GL}(V)$, we denote by $g=k_g a(g) u_g$ a KAK decomposition of $g$ in the basis $(e_1,\cdots,e_d)$ of $V$. We write $a(g)=\left(a_1(g), \cdots, a_d(g)\right)$ with $a_1(g)\geq \cdots \geq a_d(g)>0$.  Even though $k_g$ and $u_g$ are not uniquely defined, we can always fix once for all a privileged choice of a KAK decomposition. We call \emph{attracting point } and \emph{repelling hyperplane} the following respective point in $\textrm{P}(V)$ and projective hyperplane of $\textrm{P}(V)$: 
 $$v_g^+=k_g[e_1] \,\,\,,\,\,\,  H_g^{-} =  [\ker (u_g^{-1} e_1^*)]= \left(\R \,(u_g^{-1} e_1)\right)^{\perp}.$$
 In the definitions above, $(e_1^*, \cdots, e_n^*)$ denotes the dual basis of $(e_1,\cdots, e_n)$ in the dual vector space $V^*$ of $V$. Also $\textrm{GL}(V)$ acts on $V^*$ by $(gf)(x)=f(g^{-1} x)$, $g\in \textrm{GL}(V)$, $f\in V^*$ and $x\in V$. \\

 \noindent Endow the vector space $\bigwedge^2 V$ with the canonical norm associated to the basis $(e_i \wedge e_j)_{1\leq i<j\leq d}$. 
We  endow $\textrm{P}(V)$ with the standard metric $\delta$ defined by: 
$$\forall x=[v], y=[w]\in \textrm{P}(V), \delta(x,y):=\frac{|| v \wedge w||}{||v||\,||w||}.$$
This is just the sine of the angle between the lines $x= \R v$ and $y=\R w$. 
\noindent  Finally, an endomorphism $g\in \textrm{End}(V)$ is said to be \emph{proximal} if it has a unique eigenvalue with maximal modulus and a  sub-semigroup $\Gamma$ of $\textrm{GL}(V)$ is said to be proximal if it contains a proximal element. \\

\noindent We are now able to  state our main technical result: 

 \noindent 
 
 \begin{theo} Assume that $\mu$ has a moment of order one  and that $\Gamma_{\mu}$ is strongly irreducible and proximal.
 Then 
 $$\limsup_{n \to +\infty} \p\left(\delta(v_{L_n}^+, H_{L_n}^{-}) \leq  \epsilon\right) \underset{\epsilon \to 0}{\big\downarrow} 0.$$
 
 \noindent Equivalently, 
   for any sequence of real numbers $(\epsilon_n)_n$ such that $\epsilon_n \underset{n\rightarrow +\infty}{\longrightarrow}  0$,  $$\p \left( \delta(v_{L_n}^+, H_{L_n}^{-}) \leq \epsilon_n\right)  { \underset{n\rightarrow +\infty}{\longrightarrow} } 0.$$
 \label{key}\end{theo}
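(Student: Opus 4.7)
The plan is to split $L_n$ at an intermediate time into two independent pieces and to reduce the estimate to Furstenberg's non-atomicity of the $\mu$-stationary measure on $\PV$. Setting $k := \lfloor n/2 \rfloor$, write $L_n = Y Z$ with $Y := X_n \cdots X_{k+1}$ and $Z := X_k \cdots X_1$; these two factors are independent, with $Y$ (resp.\ $Z$) distributed as a left random walk of length $n-k$ (resp.\ $k$). Under our hypotheses, Guivarc'h--Raugi's theorem provides the simplicity $\lambda_1(\mu) > \lambda_2(\mu)$ of the top Lyapunov exponent, and hence $a_2(Y)/a_1(Y)$ and $a_2(Z)/a_1(Z)$ tend to $0$ almost surely.

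The first step is to prove the approximations
\[
\delta(v_{L_n}^+, v_Y^+) \longrightarrow 0 \quad \text{and} \quad \delta_*(H_{L_n}^-, H_Z^-) \longrightarrow 0
\]
in probability, where $\delta_*$ denotes the analogous distance on $\PVs$. The intuition is that a strongly proximal $Y$ acts essentially as a rank-one operator with image $v_Y^+$, so $v_{YZ}^+ \approx v_Y^+$ whenever the direction produced by $Z$ is transverse to $H_Y^-$; a symmetric (transposed) statement gives $H_{YZ}^- \approx H_Z^-$. The explicit error bounds involve $a_2(Y)/a_1(Y)$ divided by transversality quantities such as $\delta(v_Z^+, H_Y^-)$. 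Since $v_Z^+$ (resp.\ $H_Y^-$) depends only on $Z$ (resp.\ only on $Y$), these transversality quantities involve only the independent pair $(Y, Z)$ and so can be handled by the Furstenberg argument applied to the product of their laws, without circularity.

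One then reduces to estimating $\p(\delta(v_Y^+, H_Z^-) \leq \epsilon)$. By independence, conditioning on $Z$ gives
\[
\p\bigl(\delta(v_Y^+, H_Z^-) \leq \epsilon \mid Z\bigr) = \nu_Y\bigl(B_\epsilon(H_Z^-)\bigr),
\]
with $\nu_Y$ the law of $v_Y^+$ on $\PV$ and $B_\epsilon(H)$ the $\epsilon$-neighborhood of $H$. Under strong irreducibility and proximality, $\nu_Y$ converges weakly to the unique $\mu$-stationary probability measure $\nu$ on $\PV$, which by Furstenberg's theorem charges no projective hyperplane. Combining weak convergence with the $H$-Lipschitz dependence of $\nu_Y(B_\epsilon(H))$ and the compactness of $\PVs$ upgrades the pointwise vanishing $\nu(H) = 0$ to $\sup_H \nu_Y(B_\epsilon(H)) \to 0$ as $\epsilon \to 0$, uniformly in $n$. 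Taking expectation in $Z$ closes the argument.

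The main obstacle is that a moment of order one is too weak to yield the H\"older regularity of $\nu$ available under an exponential moment (Le Page). The argument must therefore rely on the purely qualitative non-atomicity, combined with the compactness/weak-convergence scheme above, and the approximations $v_{L_n}^+ \approx v_Y^+$, $H_{L_n}^- \approx H_Z^-$ must be carried out with error estimates involving only order-one quantities. This is why Theorem \ref{key} is phrased as a sequential "$\epsilon \to 0$" limit rather than a H\"older-type estimate; a quantitative refinement under order one moment, as will be needed for Theorem \ref{mainprecise}, would demand a more delicate version of Furstenberg's regularity.
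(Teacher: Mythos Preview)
Your plan is correct and follows essentially the same route as the paper: split $L_n$ at time $\lfloor n/2\rfloor$ into two independent halves, approximate $v_{L_n}^+$ by the attracting point of the left half and $H_{L_n}^-$ by the repelling hyperplane of the right half, and then use independence together with the uniform (in $H$) regularity of the $\mu$-stationary measure on $\PV$---the latter being exactly the compactness upgrade of Furstenberg's non-atomicity that you describe, and which the paper records as Lemma~\ref{p0}.

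The one implementation difference worth noting concerns the approximation step. You propose to bound $\delta(v_{YZ}^+,v_Y^+)$ directly via $a_2(Y)/a_1(Y)$ and a transversality factor $\delta(v_Z^+,H_Y^-)$, which then forces you to run the Furstenberg/compactness argument \emph{twice} (once for the transversality, once for the main estimate). The paper instead exploits that the \emph{right} random walk satisfies $v_{R_n}^+\to Z$ in probability for a fixed random variable $Z$ of law $\nu$ (Proposition~\ref{prop-estimates}, item~4); since $v_{R_n}^+$ and $v_{R_{\lfloor n/2\rfloor}}^+$ share the same limit $Z$, the approximation $v_{R_n}^+\approx v_{R_{\lfloor n/2\rfloor}}^+$ is immediate, and the final step $\p(\delta(v_{R_{\lfloor n/2\rfloor}}^+,H)\le\epsilon)\le o(1)+\p(\delta(Z,H)\le\epsilon+e^{-Cn})$ reduces everything directly to Lemma~\ref{p0}. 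This is slightly cleaner than your route (no separate transversality estimate, and no need to argue that the law of $v_Y^+$ converges weakly to $\nu$ uniformly in $H$), but both approaches are valid and rest on the same ideas.
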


In order to deduce Theorem \ref{main} from Theorem \ref{key}, we need the   following geometric lemma.  It  is    borrowed from Benoist-Quint \cite[Lemma 13.14]{benoist-quint-book}. For the convenience of the reader, we include a proof. 
\begin{lemma}
Let $g\in \textrm{GL}(V)$.
  If $\delta(v_g^+, H_g^{-})> 2 \sqrt{\frac{a_2(g)}{a_1(g)}}$, then 
 $$  \frac{\rho(g)}{||g||} \geq   \frac{\delta(v_g^+, H_g^{-})}{2}.$$
\noindent Moreover, in this case, $g$ is necessarily a proximal element. 
\label{algebra}\end{lemma}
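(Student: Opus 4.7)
The plan is to exhibit an eigenvalue of $g$ of modulus at least $a_1(g)\alpha/2$, where $\alpha := \delta(v_g^+, H_g^-)$. Set $v := u_g^{-1} e_1$ and $w := k_g e_1$, two unit vectors with $|\langle v, w\rangle| = \alpha$, and write $w = \alpha v + w_1$ with $w_1 \in H_g^-$ and $||w_1|| = \sqrt{1-\alpha^2}$ (we may assume $\alpha \geq 0$). Two elementary identities from the KAK decomposition drive the whole argument: first, $g v = a_1(g)\, w$; second, $g$ maps $H_g^- = (\R v)^\perp$ into $V$ with operator norm at most $a_2(g)$, since $u_g H_g^- = (\R e_1)^\perp$ and $a(g)$ acts on $(\R e_1)^\perp$ with norm $a_2(g)$. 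A short Cauchy--Schwarz argument in the orthonormal basis $\{u_g^{-1} e_i\}_{i=1}^d$ also yields the sharper bound $|\langle g h, v\rangle| \leq a_2(g)\sqrt{1-\alpha^2}\,||h||$ for every $h \in H_g^-$.

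In the orthogonal decomposition $V = \R v \oplus H_g^-$, these identities put $g$ into the block form
\[
g = \begin{pmatrix} a_1(g)\alpha & b^T \\ a_1(g)\, w_1 & M \end{pmatrix},
\]
with $||M|| \leq a_2(g)$ and $||b|| \leq a_2(g)\sqrt{1-\alpha^2}$. For $|\lambda| > ||M||$, the Schur complement turns the characteristic equation into the scalar identity $\lambda = a_1(g)\alpha - a_1(g)\, b^T(M - \lambda I)^{-1} w_1$, whose right-hand side is controlled in modulus by $a_1(g)\, a_2(g)\,(1-\alpha^2)/(|\lambda| - a_2(g))$---the two factors of $\sqrt{1-\alpha^2}$ coming respectively from the bounds on $||b||$ and $||w_1||$. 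Applying Rouch\'e's theorem to the holomorphic map $\lambda \mapsto \lambda - a_1(g)\alpha$ on a disk $\{|\lambda - a_1(g)\alpha| \leq r\}$ for which the perturbation above is strictly smaller than $r$ on the boundary, the hypothesis $\alpha > 2\sqrt{a_2(g)/a_1(g)}$ is what permits $r$ to be taken strictly less than $a_1(g)\alpha/2$. The resulting eigenvalue $\lambda^*$ then satisfies $\rho(g) \geq |\lambda^*| > a_1(g)\alpha/2 = ||g||\,\delta(v_g^+, H_g^-)/2$, which is the desired inequality.

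Proximality comes essentially for free. Applying the same two key identities to $\bigwedge^2 g$ on $\bigwedge^2 V$ gives Weyl's inequality $|\lambda_1(g)\,\lambda_2(g)| \leq a_1(g)\, a_2(g)$; combined with $|\lambda_1(g)| \geq a_1(g)\alpha/2$, this yields $|\lambda_2(g)| \leq 2\, a_2(g)/\alpha$, which is strictly smaller than $|\lambda_1(g)|$ precisely because $\alpha^2 > 4\, a_2(g)/a_1(g)$, so $g$ has a unique eigenvalue of maximal modulus. The delicate point in the whole proof is the Rouch\'e estimate at the threshold: the trivial bound $||b|| \leq a_2(g)$ would not suffice, and it is the refinement $||b|| \leq a_2(g)\sqrt{1-\alpha^2}$---together with a careful choice of disk radius---that makes the conclusion hold under the sharp hypothesis of the lemma.
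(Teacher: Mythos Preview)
Your block decomposition and the bounds $\|M\|\le a_2(g)$, $\|w_1\|=\sqrt{1-\alpha^2}$, $\|b\|\le a_2(g)\sqrt{1-\alpha^2}$ are all correct, and the proximality argument via Weyl's inequality is fine once the eigenvalue bound is in hand. The gap is in the Rouch\'e step. With your perturbation bound $|G(\lambda)|\le a_1 a_2(1-\alpha^2)/(|\lambda|-a_2)$ on the circle $|\lambda-a_1\alpha|=r$, the Rouch\'e condition $|G|<r$ reduces (at the worst point $|\lambda|=a_1\alpha-r$) to
\[
a_1 a_2(1-\alpha^2)\;<\;r\,(a_1\alpha-r-a_2),
\]
and you need some $r\le a_1\alpha/2$ satisfying this. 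The right-hand side is maximised at $r=(a_1\alpha-a_2)/2<a_1\alpha/2$, with maximal value $(a_1\alpha-a_2)^2/4$; so a \emph{necessary} condition for your argument to run is $(a_1\alpha-a_2)^2>4a_1a_2(1-\alpha^2)$. This is \emph{not} implied by the lemma's hypothesis $a_1\alpha^2>4a_2$. For instance take $\alpha=1/4$, $a_1=65$, $a_2=1$: then $4a_2/a_1=4/65<1/16=\alpha^2$, yet $(a_1\alpha-a_2)^2=(61/4)^2=232.56\ldots$ while $4a_1a_2(1-\alpha^2)=65\cdot 15/4=243.75$. Hence no radius $r$ makes the comparison go through, and the assertion that ``the hypothesis $\alpha>2\sqrt{a_2/a_1}$ is what permits $r$ to be taken strictly less than $a_1(g)\alpha/2$'' is not justified. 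The refinement $\|b\|\le a_2\sqrt{1-\alpha^2}$, though correct, does not close the gap; roughly speaking, your estimate loses a factor that matters precisely when $\alpha$ is small and the hypothesis is nearly sharp.

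The paper's proof avoids this difficulty by working on projective space rather than with the characteristic polynomial. It shows that $g$ maps the complement $U_{\alpha/2}$ of the $(\alpha/2)$-neighbourhood of $H_g^-$ in $\textrm{P}(V)$ into itself and acts there as a strict $\delta$-contraction with Lipschitz constant at most $\frac{a_2}{a_1}\cdot\frac{4}{\alpha^2}<1$; the Banach fixed-point theorem then produces a fixed line $x_g^+\in U_{\alpha/2}$, and the eigenvalue bound follows from the elementary inequality $\|gv\|\ge \|g\|\,\delta([v],H_g^-)\,\|v\|$. This contraction argument uses the hypothesis in exactly the sharp form $4a_2/a_1<\alpha^2$ and gives proximality for free from the uniqueness of the fixed point.
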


 \begin{proof}
Fix $g\in \textrm{GL}(V)$. To simplify the notation, let  $\delta_g:= \delta(v_g^+, H_g^{-})$. For every $\epsilon>0$,   
let $U_{\epsilon}\subseteq \textrm{P}(V)$ be the complement of the  closed $\epsilon$-neighborhood around $H_g^{-}$, i.e.~
 $U_{\epsilon}:=\{x\in \textrm{P}(V); \delta(x,H_g^{-}) >  \epsilon\}$. 
The following statements are easy to verify using the definition of the Cartan decomposition and the standard metric $\delta$ 
 (except the   statement i.~ which directly    follows   from the   triangle inequality)
\begin{enumerate}
\item[i.] $B(v_g^+, \delta_g/2) \subset  U_{\delta_g/2}$, where $B(x,r)$ refers to the open ball of center $x\in \textrm{P}(V)$ and radius $r$ in the metric space $\left(\textrm{P}(V), \delta\right)$. 
\item[ii.] For every $\epsilon>0$, and for $r:= \left(  1+ \frac{a_1(g)^2}{a_2(g)^2} \epsilon^2 \right)^{-1/2}$, one has that 
$$g \cdot U_{\epsilon}  \subseteq \overline{B(v_g^+, r)}\subset B\left(v_g^+, \frac{a_{2}(g)}{a_{1}(g)} \, \frac{1}{\epsilon}\right).$$
\item[iii.] \begin{equation}\forall v\in V\setminus \{0\},  \left[\delta([v],H_g^{-})\right]^2\leq \left(\frac{||gv||}{||g||\,||v||} \right)^2 \leq  \left[\delta([v],H_g^{-})\right]^2+  \left(\frac{a_2(g)}{a_1(g)}\right)^2.\nonumber
\end{equation}
In particular, 
\begin{equation}\forall  [v]\in U_{\epsilon},\, \frac{||gv||}{||g||\,||v||} \geq \epsilon.\label{sa1}\end{equation}
\item[iv.] For every $\epsilon>0$, $$\sup_{x,y \in U_{\epsilon}} \frac{\delta\left(g \cdot x,g \cdot y\right)}{\delta(x,y)} \leq \frac{a_{2}(g)}{a_{1}(g)} \frac{1}{\epsilon^2}.$$
 \end{enumerate}
 \noindent Since the family $(U_{\epsilon})_{\epsilon>0}$ is decreasing,  we deduce   from observations i.~ and ii.~ 
above that $\overline{g\cdot U_{\epsilon}}\subset U_{\epsilon}$  as soon as $\frac{a_{2}(g)}{a_{1}(g)} \frac{2}{\delta_g}\leq \epsilon \leq \frac{\delta_g}{2}$. 
    From now, we assume that $\delta_g> 2  \sqrt{\frac{a_{2}(g)}{a_{1}(g)}}$  and we set $\epsilon=\delta_g/2$. 
With these assumptions, inequality iv.~ implies that the action of $g$ on the complete metric space $\overline{U_{\epsilon}}$ is contracting. Thus $g$ has a unique fixed point $x_g^+$ in   $\overline{g \cdot U_{\epsilon}}\subset U_{\epsilon}$.   This fixed point provides  an  eigenvalue 
$\lambda$ of $g$ whose direction is given by $x_g^+$. By \eqref{sa1}, we have $\frac{ |\lambda|}{||g||} \geq \epsilon= \delta_g/2$. 
 A fortiori, the spectral radius $\rho(g)$ of $g$  satisfies the desired inequality.  This proves the desired lower bound. By Tits converse lemma (see e.g.~\cite[Lemma 4.7]{breuillard-strongtits}), we have that $g$ is proximal and that the unique fixed point $x_g^+$ of $g$ in $\textrm{P}(V)$ corresponding to the top eigenvalue  belongs to $U_{\epsilon}$. 
     \end{proof}

\begin{proof}[Proof of Theorem \ref{main} modulo Theorem \ref{key}:]  
First, we show that we can assume without loss of generality that $\Gamma_{\mu}$ is strongly irreducible and proximal (i-p to abbreviate). Indeed, let $p\in \{1, \cdots, d\}$ be the proximality index of $\Gamma_{\mu}$, i.e.~ the least integer $k\in \{1,\cdots, d\}$ such that there exists a sequence of scalars $\lambda_n\in \R$ and of elements $g_n\in \Gamma_{\mu}$ such that $\lambda_n g_n$ converges in $\textrm{End}(V)$ to a endomorphism of rank $k$.  By \cite[Lemma 4.13  ]{benoist-quint-tcllineaire} there exist a $\Gamma_{\mu}$-invariant subspace $W$ of $\bigwedge^p V$ such that   such that the action of $\Gamma_{\mu}$ on $W$ is  i-p and such that   
$\{\frac{||g||^p}{||\pi(g)||}; g\in \Gamma_{\mu}\}$ is bounded, where $\pi: \Gamma_{\mu} \to \textrm{GL}(W)$ is the restriction representation\footnote{In positive characteristic, by \cite[Lemma 4.13]{benoist-quint-tcllineaire}, one has to replace $W$ by $W/U$ for some subspace $U$ of $W$ and $\pi$ by the representation on $W/U$. With these modifications, \eqref{toprox} remains true.} . 
 Let $C:=\sup\left\{\frac{||g||^p}{||\pi(g)||}; g\in \Gamma_{\mu}\right\}\in [1,+\infty)$. 
Since $\rho\left(\pi(g)\right)\leq \rho(\wedge ^p g)\leq \rho(g)^p$,   then for every $\epsilon>0$ and $n\in \N$, 
\begin{equation}\p\left(\frac{\rho(L_n)}{||L_n||}\leq \epsilon\right)\leq \p\left(\frac{\rho(\pi(L_n))}{||\pi(L_n)||}\leq C\epsilon^p \right).\label{toprox}\end{equation}
Thus  proving Theorem \ref{main} for $\Gamma_{\pi(\mu)}$ is enough to prove the same estimate for $\Gamma_{\mu}$. \\

\noindent For now on $\Gamma_{\mu}$ is assumed to be i-p. 
For every $n\in \N$,  let $L_n=k_n a_n u_n$   be a  KAK decomposition of $L_n$, $v_n^+$ the attracting point of $L_n$, $H_n^-$ its repelling hyperplane and     $\delta_n:=\delta(v_n^+, H_n^-)$. Let $\Omega_n\subseteq \Omega$ be the following event $$\Omega_n:=\left\{\omega\in \Omega; \,\delta_n^2 (\omega) > 4 \frac{a_{2,n}(\omega)}{ a_{1,n}(\omega)}  \right\}.$$
First, we check that \begin{equation}
\p(\Omega_n)\underset{n\rightarrow +\infty}{\longrightarrow} 1.\label{j1}\end{equation} Indeed, by definition of the Lyapunov exponents,   the following   convergence holds in probability: 
$$\frac{1}{n}\ln\frac{a_{2,n}}{a_{1,n}}  \underset{n\rightarrow +\infty}{\overset{\p}{{\longrightarrow}}} \lambda_2-\lambda_1.$$
 Hence  for $\gamma:= (\lambda_1(\mu) - \lambda_2(\mu))/2$,  
 $$\p\left(\frac{a_{2,n}}{a_{1,n}}\leq \exp(-n \gamma)\right) \underset{n\rightarrow +\infty}{\longrightarrow} 1.$$
Since $\Gamma_{\mu}$ is i-p,  Guivarc'h-Raugi's theorem \cite{guivarch-raugi} ensures that $\gamma>0$.     Applying now  Theorem \ref{key} for $\epsilon_n=3\exp(-n\gamma/2)$, gives that with probability tending to one, $\delta_n^2>9\exp(-n\gamma)>4\frac{a_{2,n}}{a_{1,n}}$, i.e.~$\p(\Omega_n) \underset{n\rightarrow +\infty}{\longrightarrow} 1$.  \\
Let now $\epsilon>0$. By Lemma \ref{algebra}, we have for every $n\in \N^*$,  
     $$\p\left( \frac{\rho(L_n)}{||L_n||} \leq \epsilon  \right) \leq  \p(\Omega\setminus \Omega_n) + \p \left( \delta(v_n^+, H_n^{-})\leq  2 \epsilon \right).$$
     Tending $n \rightarrow +\infty$ and using \eqref{j1}, we deduce that for every $\epsilon>0$, 
 \begin{equation}\limsup_{n\rightarrow +\infty}{\p\left( \frac{\rho(L_n)}{||L_n||} \leq  \epsilon  \right)}\leq  \limsup_{n\rightarrow +\infty}{\p \left( \delta(v_n^+, H_n^{-})\leq  2 \epsilon \right)}.\label{proofo0}\end{equation}
Applying    Theorem \ref{key}, we deduce that the quantity above converges to zero as $\epsilon \rightarrow 0$. 
  \end{proof}

 We easily deduce the proof of Theorem \ref{maintcl}. We recall the classical Slutsky's lemma in probability theory which asserts that if $(X_n)_{n\in \N}$ and $(Y_n)_{n\in \N}$ are two sequences of random  variables such that $(X_n)_{n}$ converges in law to a random variable $X$ and $(Y_n)_{n\in \N}$ converges in probability to a constant $c\in \R$, then the joint vector $(X_n,Y_n)$ converges in law to $(X,c)$; a fortiori $X_n+Y_n$ converges in law towards $X+c$. 

\begin{proof}[Proof of Theorem \ref{maintcl}:]
 By Benoist--Quint's central limit theorem \cite[Theorem 1.1]{benoist-quint-tcllineaire} for $\ln ||L_n||$ and   Slutsky's lemma,   all we need   to show is the following convergence in probability: 
\begin{equation}Y_n:=\frac{1}{\sqrt{n}} \ln \frac{||L_n||}{\rho(L_n)} \underset{n\rightarrow +\infty}{\overset{\p}{\longrightarrow}} 0.\label{i0}\end{equation}
This convergence is guaranteed by Theorem \ref{main} (using \eqref{equamain}).  \end{proof}

  We end this section by stating and proving a general version of   Theorem \ref{maingen}, using the language of reductive groups. 
  Before  stating the result, we recall standard notion of reductive groups (we refer for instance to \cite{Knapp}). Let $\GG$ be a linear reductive algebraic group assumed to be Zariski connected and denote by  
  $G=\GG(\R)$ its group of real points. We denote by $K$ a maximal compact subgroup of $G$, $\mathfrak{a}$ the Lie algebra of a 
  maximal $\R$-split torus $A$ with $\mathfrak{a}^+$ a positive Weyl chamber, 
  i.e.~ the cone in $\mathfrak{a}$ defined by the requirement that all positive roots take non-negative values. 
  Let $A^+=\exp\left(\mathfrak{a}^+\right)$. One has that $G=K A^+K$ called Cartan or KAK decomposition. The $A^+$-component of 
   an element of $G$ in this product is unique. 
  This yields the so-called Cartan projection $\kappa : G \longrightarrow \mathfrak{a}^+$. \\
  Recall also the Jordan decomposition:  any $g\in G$ can be written as a commuting product of a unipotent element, an elliptic element and a hyperbolic element (i.e.~an element with a conjugate in $A$). One can then define the    Jordan projection $\ell: G \longrightarrow \mathfrak{a}^+$ where 
  $\ell(g)$ is the unique element of $\mathfrak{a}^+$ such that $\exp(\ell(g))$ is conjugate to the hyperbolic part of $g$ in the 
  Jordan decomposition of $g$. These are projections of the linear group $G$ which encode the information of the moduli of eigenvalues and operator norms on certain linear representations of $G$.\\
  
\noindent  Let now  $\mu$ be a probability measure on $G$. We say that $\mu$ has a moment of order $p\geq 1$ if for some, or equivalently any, faithful linear representation   $\phi: G\longrightarrow \textrm{GL}_n(\R)$ of $G$, $\phi(\mu)$ has a moment of order $p$. Let   $(L_n)_{n\geq 1}$ be the left random walk on $G$ associated to $\mu$. The equivalent formulation of \eqref{eh} reads as follows:  when $\mu$ has a moment of order one, the vector $\frac{\kappa(L_n)}{n}$ converges almost surely to a non random  element 
$\overset{\rightarrow}{\lambda_{\mu}}\in \mathfrak{a}^+$,  called the Lyapunov vector of $\mu$. \\

\begin{theo}(Generalization of Theorem \ref{maingen}) Let  $\GG$ be a reductive real algebraic group, $G$ its group of real points,  
 $\Gamma$ a Zariski dense sub-semigroup of $G$. Consider a probability measure 
 $\mu$ on $\Gamma$ whose support generates $\Gamma$. 
 Assume that $\mu$ has a moment of order one. Then,  
 
\begin{equation}\limsup_{n \to +\infty} \p(\|\kappa(L_n)  - \ell(L_n)\| > M) \underset{M \to +\infty}{\big\downarrow} 0.\label{d1} \end{equation} 
\noindent Moreover, if $\mu$ has a moment of order two, then 
 the following convergence in law holds:
\begin{equation}\frac{ \ell(L_n) - n \overset{\rightarrow}{\lambda_{\mu}}}{\sqrt{n}} \underset{n\rightarrow +\infty}{\overset{\mathcal{L}}{\longrightarrow}} \mathcal{N}(0,K_{\mu}),\label{d2}
 \end{equation}
where $\mathcal{N}(0,K_\mu)$ is a multidimensional gaussian centered gaussian distribution. Its support is a vector subspace $\mathfrak{a}_\mu$ of $\mathfrak{a}$ which contains the intersection of $\mathfrak{a}$  with the Lie subalgebra of the derived group of $\mathbf{G}$. In particular $\mathfrak{a}_\mu=\mathfrak{a}$  when $\mathbf{G}$ is semisimple. 
 \label{reductive} \end{theo}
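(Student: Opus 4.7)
The plan is to reduce both parts of Theorem \ref{reductive} to Theorem \ref{main} and to Benoist--Quint's central limit theorem for the Cartan projection, by working inside a sufficiently rich family of strongly irreducible proximal representations of $\GG$.

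First, I would fix irreducible $\R$-rational representations $(\pi_i, V_i)_{i = 1,\dots,r}$ of $\GG$, indexed by the simple restricted roots of $\mathfrak{a}$, each chosen so that its highest restricted weight $\chi_i$ is a positive multiple of the $i$-th fundamental weight. Equipping each $V_i$ with a $K$-invariant Euclidean norm, the $KAK$ and Jordan decompositions yield the exact identities
\[
\log \|\pi_i(g)\| = \chi_i(\kappa(g)), \qquad \log \rho(\pi_i(g)) = \chi_i(\ell(g))
\]
for every $g \in G$. Because $\GG$ is Zariski connected and $\Gamma$ is Zariski dense, $\pi_i(\Gamma)$ is Zariski dense in the reductive group $\pi_i(\GG)(\R)$, and therefore acts strongly irreducibly and proximally on $V_i$: any finite union of proper invariant subspaces would have Zariski-closed stabilizer of positive codimension, while proximality comes from regular dominance of $\chi_i$. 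The push-forward $\pi_i(\mu)$ inherits any moment of $\mu$.

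For assertion \eqref{d1}, I would apply Theorem \ref{main} to $\pi_i(\mu)$ for each $i$ and take logarithms in the two identities above. This yields, for every $i$,
\[
\limsup_{n \to +\infty} \p \bigl( \chi_i(\kappa(L_n)) - \chi_i(\ell(L_n)) > M \bigr) \underset{M \to +\infty}{\big\downarrow} 0.
\]
Since $\rho(\pi_i(g)) \leq \|\pi_i(g)\|$, one always has $\chi_i(\kappa - \ell) \geq 0$. The linear forms $\chi_1,\dots,\chi_r$ span the dual of the $\R$-split part of $\mathfrak{a}$ contained in the derived group, while on the orthogonal central part of $\mathfrak{a}$ one has $\kappa = \ell$ by abelianness of the center. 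Assembling these $r$ one-sided tail bounds via the equivalence of norms on $\mathfrak{a}$ then gives \eqref{d1}.

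For \eqref{d2}, Benoist--Quint's order two moment CLT for the Cartan projection (the reductive extension of \cite[Theorem~1.1]{benoist-quint-tcllineaire}, see \cite[Chapter~13]{benoist-quint-book}) furnishes
\[
\frac{\kappa(L_n) - n \overset{\rightarrow}{\lambda_{\mu}}}{\sqrt{n}} \underset{n \to +\infty}{\overset{\mathcal{L}}{\longrightarrow}} \mathcal{N}(0, K_\mu)
\]
together with the stated description of the support $\mathfrak{a}_\mu$. Moreover \eqref{d1} upgrades to convergence in probability of $(\kappa(L_n) - \ell(L_n))/\sqrt n$ to $0$: for every $\epsilon, M > 0$ and $n$ large enough, $\p(\|\kappa(L_n) - \ell(L_n)\| > \epsilon \sqrt n) \leq \p(\|\kappa(L_n) - \ell(L_n)\| > M)$, so taking $\limsup_n$ and then letting $M \to +\infty$ gives zero by \eqref{d1}. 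Slutsky's lemma finally transfers the CLT from $\kappa(L_n)$ to $\ell(L_n)$. The hard point is really the first step: pinning down, for each simple root, a highest weight representation in which $\pi_i(\Gamma)$ is strongly irreducible and proximal in the precise sense required by Theorem \ref{main}, and verifying the two exact identities; this is also where the split reductive structure of $\GG$ enters, and dictates the description of the support $\mathfrak{a}_\mu$.
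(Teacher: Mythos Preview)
Your proposal is correct and follows essentially the same route as the paper: reduce to Theorem \ref{main} via a family of irreducible representations whose highest weights span $\mathfrak{a}^*$, use the identities $\chi_i(\kappa(g))=\log\|\pi_i(g)\|$ and $\chi_i(\ell(g))=\log\rho(\pi_i(g))$ (the paper invokes Mostow's theorem \cite{mostow} for these), and then combine \eqref{d1} with Slutsky's lemma and Benoist--Quint's CLT for the Cartan projection (the paper cites \cite[Theorem 4.16]{benoist-quint-tcllineaire}). The only cosmetic difference is that the paper takes a full basis $\{\chi_1,\dots,\chi_d\}$ of $\mathfrak{a}^*$ by adjoining $d-d_S$ characters of the abelianization $\GG/[\GG,\GG]$ to the fundamental weights, whereas you use only the $d_S$ fundamental-weight representations and dispose of the central directions by the direct observation that $\kappa=\ell$ there; both are valid and amount to the same thing, since for a one-dimensional character $\chi$ one has $\|\chi(g)\|=\rho(\chi(g))$ trivially.
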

\begin{proof} 
Let $d$ be the real rank of $\GG$ and $d_S$ its semisimple rank. There exists a basis $\{\chi_1, \cdots, \chi_d\}$
 of the dual $\mathfrak{a}^*$ such that each $\chi_i$ is a highest weight of some irreducible  representation $V_i$ of $\GG$ (all of these representations are also    strongly irreducible by Zariski connectedness of $\GG$). Indeed, it is enough to concatenate   the $d_S$ fundamental weights to  $d-d_S$ characters of the  abelianization $\GG/[\GG,\GG]$ of $\GG$.    But if $(\psi,V)$ is an irreducible representation of $\GG$ and $\chi$ is a highest weight, then by using Mostow's theorem \cite{mostow}, we can find a norm $||\cdot||$  (depending on $\psi$ and $V$) on each $V$ such that for every $g\in \GG(\R)$, 
 $\chi(\kappa (g))=\ln (||\psi(g)||)$ and $\chi(\ell(g))=\ln \left(\rho(\psi(g))\right)$. Applying now Theorem \ref{main} on each $(V_i,\rho_i)$   proves \eqref{d1}.\\
We deduce \eqref{d2} from \eqref{d1} in the same way we deduced Theorem \ref{maintcl} from Theorem \ref{main}, i.e.~using \eqref{d1}, Slutsky's lemma and Benoist--Quint's central limit theorem for the Cartan projection \cite[Theorem 4.16]{benoist-quint-tcllineaire}. 
\end{proof}

 \begin{remark}  
  The Lyapunov vector lies actually in the open Weyl chamber $\mathfrak{a}^{++}$. This is well-known and follows from the combination of  Guivarc'h--Raugi's theorem \cite{guivarch-raugi} on the simplicity of the Lyapunov spectrum together with Goldsheid--Margulis's result \cite{goldsheid-margulis}  (see also Benoist--Labourie \cite{benoist-labourie} and Prasad \cite{prasad-rapinchuk}) concerning the existence of proximal elements in Zariski dense subgroups of real algebraic groups with proximal elements. \end{remark}
  
  \begin{remark}
 Assume now that $\mu$ has a moment of order two.   Breuillard and Sert refined recently the previous result. Indeed, they proved   in \cite[Theorem 1.9]{breuillard-cagri} that $\overset{\rightarrow}{\lambda_{\mu}}$
   lies in the interior of the Benoist cone of $\Gamma$ introduced by Benoist \cite{benoistcone} (it is the closure in $\mathfrak{a}^+$ of the positive linear combinations of $\ell(g)$, $g\in \Gamma$).  
    \end{remark}

\section{Estimates with a moment of order one}
In this section,  we provide  some qualitative estimates concerning the behavior of the random walk with a moment of order one.  In particular, item 5 of Proposition \ref{prop-estimates} is the analog of   Theorem \ref{key}, but with    $H_{L_n}^{-}$
   replaced by a deterministic hyperplane $H$. The uniformity of our constants in the hyperplane $H$ will be crucial as showed in the next section, where we fully prove Theorem \ref{key}. The regularity of the stationary measure  on   projective space (Lemma \ref{p0} below) is a crucial ingredient.   We also discuss what is known about the speed of convergence in Remark \ref{r11} and Remark \ref{r12}, when $\mu$ has higher order moments. \\

     \noindent We recall that if $\mu$ is a probability measure on $\textrm{GL}_d(\R)$, then a probability measure $\nu$ on $\PV$ is said to be $\mu$-stationary if for every continuous function $f$ on $\textrm{P}(V)$, $\int_{\textrm{P}(V)}{f  d\nu}=\iint_{\textrm{GL}(V)\times \textrm{P}(V)}{f(g \cdot x) d\mu(g) d\nu(x)}$. \\
    When $\Gamma_{\mu}$ is strongly irreducible, classical arguments of Furstenberg show that any $\mu$-stationary probability measure on $\PV$ is non-degenerate, i.e.~ $\nu(H)=0$ for every projective hyperplane $H$ (see \cite[Chapter III, Proposition 2.3]{bougerol-lacroix}). Now, if $\Gamma_{\mu}$ is strongly irreducible and  proximal, Guivarc'h and Raugi proved in \cite{guivarch-raugi} that there exists a unique $\mu$-stationary probability measure $\nu$ on $\textrm{P}(V)$. The next lemma gives information on the regularity of stationary measures.

\begin{lemma} Assume that $\mu$ has a moment of order one and that  $\Gamma_{\mu}$ is strongly irreducible. 
Let $\nu$ be any $\mu$-stationary probability measure on $\textrm{P}(V)$. Then,  
\begin{equation}\sup_{\textrm{$H$ projective hyperplane of $\textrm{P}(V)$}} \,\nu\{x\in \textrm{P}(V); \delta(x,H)\leq t\}   \underset{t\rightarrow 0}{\longrightarrow} 0.
\label{es1}\end{equation}
Equivalently, there exists a proper map $\phi: [1,+\infty) \longrightarrow [1,+\infty)$ such that 
\begin{equation}\sup_{\textrm{$H$ projective hyperplane of $\textrm{P}(V)$}}\, \int_{\textrm{P}(V)} {\phi\left(\frac{1}{\delta(x,H)} \right)\,d\nu(x)} < +\infty.\label{es2}\end{equation}
\label{p0}\end{lemma}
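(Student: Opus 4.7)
The two formulations \eqref{es1} and \eqref{es2} are essentially equivalent via a standard layer-cake construction, so I would first establish \eqref{es1} and then deduce \eqref{es2}. For the latter reduction, assuming \eqref{es1}, set $\psi(t):=\sup_{H} \nu\{x\in \PV:\delta(x,H)\leq t\}$, which tends to $0$ as $t\to 0^+$, and pick a sequence $t_k\downarrow 0$ with $\psi(t_k)\leq 2^{-k}$. Define a non-decreasing proper map $\phi:[1,+\infty)\to[1,+\infty)$ by $\phi(s):=1+\#\{k\geq 1: 1/t_k \leq s\}$. Then $\phi(1/\delta(x,H))=1+\sum_{k\geq 1}\mathds{1}_{\delta(x,H)\leq t_k}$, and integrating gives $\int \phi(1/\delta(x,H))\,d\nu(x) \leq 1 + \sum_k 2^{-k}<+\infty$ uniformly in $H$, which is \eqref{es2}.

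I would prove \eqref{es1} by contradiction, combining compactness of the space of projective hyperplanes (identified with $\PVs$) with the Furstenberg non-degeneracy statement recalled just before the lemma: under strong irreducibility, every $\mu$-stationary probability measure $\nu$ on $\PV$ satisfies $\nu(H)=0$ for every projective hyperplane $H$. Suppose \eqref{es1} fails; then there exist $\epsilon>0$, projective hyperplanes $H_n$, and $t_n\downarrow 0$ such that $\nu\{x:\delta(x,H_n)\leq t_n\}\geq \epsilon$ for every $n$. Extracting a subsequence by compactness, I may assume $H_n\to H_\infty$ in $\PVs$, with unit-norm representatives $f_n,f_\infty\in V^*$ satisfying $f_n\to f_\infty$. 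Since $\delta([v],H)=|f(v)|$ for any unit vector $v$ and any unit $f$ representing $H$, the estimate $|\delta(x,H_n)-\delta(x,H_\infty)|\leq \|f_n-f_\infty\|$ holds uniformly in $x\in \PV$. Setting $s_n:=t_n+\|f_n-f_\infty\|\to 0$, one has $\{\delta(\cdot,H_n)\leq t_n\}\subseteq \{\delta(\cdot,H_\infty)\leq s_n\}$, so that $\nu\{\delta(\cdot,H_\infty)\leq s_n\}\geq \epsilon$ for every $n$. Since the closed sets $\{\delta(\cdot,H_\infty)\leq s\}$ decrease to $H_\infty$ as $s\to 0^+$, continuity of $\nu$ forces $\nu(H_\infty)\geq \epsilon$, contradicting Furstenberg's non-degeneracy.

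The proof is essentially classical and I do not anticipate a serious obstacle; the one point that needs care is the uniformity of the estimate in the hyperplane $H$, which is precisely what compactness of $\PVs$ delivers. The moment of order one hypothesis plays no visible role in this particular argument and is merely inherited from the standing assumptions of the section, which is coherent with the conclusion being purely qualitative, in line with the subsequent remarks that quantitative rates of decay require stronger moment assumptions on $\mu$.
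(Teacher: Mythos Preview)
Your proof is correct and follows essentially the same approach as the paper: contradiction via compactness of $\PVs$ combined with Furstenberg non-degeneracy for \eqref{es1}, then a layer-cake/tail-sum construction of $\phi$ for \eqref{es2}. The only omission is the (easy) reverse implication \eqref{es2}$\Rightarrow$\eqref{es1} via Markov's inequality, which the paper includes to justify the word ``Equivalently'' in the statement; your remark that the first-moment hypothesis is not used here is also correct.
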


\begin{proof} Let $\nu$ be a $\mu$-stationary probability measure on $\PV$.   If \eqref{es1} was not true, then there would exist some $\epsilon_0>$ and a sequence of projective hyperplanes $H_n:=[\ker(f_n)]$ such that for every $n\in \N$, 
\begin{equation}\nu\left\{x\in \PV; \delta(x,H_n)\leq \frac{1}{n}\right\}>\epsilon_0.\label{hyp1}\end{equation} 
  Since $\PVs$ is compact, we can extract a convergent subsequence $[f_{n_k}]_{k\in \N}$ of $[f_n]$, say to $[f] \in \PVs$. Let $H:=[\ker(f)]$ and $a_n=\frac{1}{n}+\sqrt{2}\delta([f_n],[f])$. Observe that the following inequality is true for every $x\in \PV$ and every $f,f'\in V^*$ such that $||f||=||f'||=1$,  \begin{equation}\label{useful} \Big| \delta([x], [\ker f])-\delta([x],[\ker f']) \Big|=\Big| \,\frac{|f(x)|}{||x||} - \frac{|f'(x)|}{||x||}\,\Big| \leq \min \{||f-f'||, ||f+f'||\}\leq \sqrt{2}\, \delta([f], [f']).\end{equation} We deduce from \eqref{hyp1} and \eqref{useful} that 
for every $k\in \N$, $\nu(\{x\in \PV; \delta(x,H)<a_{n_k}\})>\epsilon_0$. But since $H$ is closed in $\PV$ and $a_{n_k}\underset{k \rightarrow +\infty}{\longrightarrow} 0$, $\nu(\{x\in \PV; \delta(x,H)<a_{n_k}\})\underset{k\rightarrow +\infty}{\longrightarrow}  \nu(H)$. Thus $\nu(H)\geq \epsilon_0$ contradicting the non degeneracy of $\nu$. \\

\noindent Now we check that \eqref{es1} is equivalent to \eqref{es2}. Assume first that \eqref{es1} holds. We can then find a decreasing sequence $(a_n)_{n\in \N}$ in $(0,1)$  that converges to zero such that for every projective hyperplane $H$, 
 $$\forall k\in \N, \,\nu\{x\in \textrm{P}(V); \delta(x,H)\leq a_{k}\} < e^{-k}.$$
For every $k\in \N^*$, denote by  $U_k$   the   interval $[\frac{1}{a_k}, \frac{1}{a_{k+1}})$ with the convention $U_0=[1,\frac{1}{a_1})$.  
Let $\phi:[1,+\infty) \longrightarrow [1,+\infty)$ be any proper function such that   $\phi_{|_{U_k}}  \leq e^{k/2}$ for every $k\in \N$ (for instance affine on each $U_k$ with
 $\phi(\frac{1}{a_k}) = e^{(k-1)/2}$).  
  Let $H$ be a projective hyperplane and $A_k=  \{x\in \PV\setminus H; \frac{1}{\delta(x,H)}\in U_k\}$, $k\in \N$.  Since $(A_k)_{k\geq 0}$ covers 
$\PV\setminus  H$ and since $\nu$ is not degenerate on $\PV$, we deduce that 
 \begin{eqnarray}
 \int_{\PV}{\phi\left(\frac{1}{\delta(x,H)}\right)\,d\nu(x)} &=& \sum_{k=0}^{+\infty}{\int_{A_k}{\phi\left(\frac{1}{\delta(x,H)}\right)\,d\nu(x)}}\nonumber\\
 &\leq &  \sum_{k=0}^{\infty}{e^{-k}e^{k/2}} <+\infty.
 \nonumber\end{eqnarray}
 The finite sum above being independent of $H$,  the forward implication is proved. \\
Conversely, assume that \eqref{es2} holds and let  $C:=\sup_{H} \int_{\PV}{\phi\left(\delta^{-1}(x,H)\right)\,d\nu(x)} <+\infty$. \\
Let $\epsilon>0$. By properness of $\phi$ we can find $\eta>0$ such that $\phi(\frac{1}{t})>\frac{C}{\epsilon}$ for every $0<t<\eta$.  \\Hence, for every $t\in (0,\eta)$ and for every projective hyperplane $H$,  \\
$\nu \big\{x\in \PV; \delta(x,H)<t\big\}\leq \nu \big\{x\in \PV; \phi(\delta(x,H)^{-1})> \frac{C}{\epsilon}\big\}$. By Markov's inequality, we deduce that
for every $H$ one has that  $\nu \big\{x\in \PV; \delta(x,H)<t\} \leq \epsilon$ whenever $t\in (0,\eta)$. This proves the backward implication.  
\end{proof}

\begin{remark} Assume now that $\Gamma_{\mu}$ is strongly irreducible and proximal, so that the stationary measure $\nu$ is unique. \begin{enumerate}
\item When $\mu$ has an exponential moment,    Guivarc'h showed in \cite{Guivarch3} that  $\phi(x)=x^{\alpha}$ works for some $\alpha>0$ small enough. In other terms, $\nu$ has Holder regularity. In particular, $\nu$ has positive Hausdorff dimension. 
\item When  $\mu$ has a moment of order $p>1$,   Benoist and Quint proved in   \cite{benoist-quint-tcllineaire} that $\phi(x)=\left(\ln(x)\right)^{p-1}$ works. In particular, when $p\geq 2$, $\nu$ is log-regular (i.e.~ $\phi(x)=\ln x$ works). We note that proving the log-regularity of $\nu$ when $p=2$ was crucial for Benoist and Quint to prove the CLT for $\ln ||L_n||$. 
\item Lemma \ref{p0} shows that   such a function $\phi$ still exists when $\mu$ has a moment of order one. However, it  does not give an explicit   rate of growth for $\phi$. It would be interesting to determine such a rate. 
\item If $\Gamma_{\mu}$ is a non-elementary subgroup of  $\textrm{SL}_2(\R)$, more can be said about   the regularity of $\nu$ when $\mu$ has a moment of order one.   Indeed, using the   work of Benoist and Quint \cite[Section 5]{benoist-quint-tclhyperbolic} on central limit theorems on hyperbolic groups, one can deduce that the unique $\mu$-stationary probability measure on the projective line is $\log$-regular, even when $\mu$ has a moment of order one.  \end{enumerate}
\label{r11}\end{remark}

 We state now the estimates we will use. 
 
  \begin{prop} Assume that $\mu$ has a moment of order one and that $\Gamma_{\mu}$ is strongly irreducible and proximal. 
 Denote by $\mu^t$ the pushforward probability measure on $\textrm{GL}(V^*)$ of $\mu$ by the map $g\mapsto g^t$. Then 
 \begin{enumerate}

 \item For every $\epsilon>0$, $$\sup_{||v||=1} \p\left(\frac{||L_n v||}{||L_n||} \leq \exp(-\epsilon n)\right) \underset{n\rightarrow +\infty}{\longrightarrow} 0.$$
  \item There exists $C>0$ such that 
$$\sup_{x,y\in \textrm{P}(V)} \p\left( \delta(L_n \cdot x,L_n \cdot y) \geq \exp(-Cn)\right)  \underset{n\rightarrow +\infty}{\longrightarrow} 0.$$
\item There exists $C>0$ such that 
$$\sup_{ x\in \PV} \p\left( \delta(R_n \cdot x ,v_{R_n}^+) \geq \exp(-Cn)\right)  \underset{n\rightarrow +\infty}{\longrightarrow} 0.$$
 \item There exists $C>0$, a    random variable $Z$ with values in $\textrm{P}(V)$ of law the unique $\mu$-stationary probability measure on $\textrm{P}(V)$  such that $$ 
\sup_{x \in \textrm{P}(V)}{\p\left(\delta(R_n \cdot x,Z)\geq \exp(-Cn) \right)} \underset{n\rightarrow +\infty}{\longrightarrow} 0\,\,\,\,\textrm{and}\,\,\,\,\p\left(\delta(v_{R_n}^+,Z)\geq \exp(-Cn) \right)  \underset{n\rightarrow +\infty}{\longrightarrow} 0$$
 \item Similarly,     there exists  $C>0$, a  random variable $Z^*\in \textrm{P}(V^*)$ of law the unique $\mu^t$-stationary probability measure on $\textrm{P}(V^*)$ such that if $H_{L_n}^{-}:= [\ker(f_{L_n})]$, then 
 $$\p\left( \delta(f_{L_n}, Z^*)\geq \exp(-Cn)\right)  \underset{n\rightarrow +\infty}{\longrightarrow} 0,$$
 where $\delta$  denotes again, by abuse of notation,  the standard metric on $\textrm{P}(V^*)$. 
      \end{enumerate}
\label{prop-estimates} \end{prop}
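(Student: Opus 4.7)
The plan is to prove the five items in the order: weak form of item 4 first, then item 5 by a duality argument, then item 4 in full, and finally items 3, 2, 1. The key duality observation is that item 5 is item 4 applied to $\mu^t$: since $(L_n)^t = X_1^t \cdots X_n^t$ is the right random walk for $\mu^t$, a direct KAK decomposition computation shows that $[f_{L_n}]$ equals the attracting point in $\textrm{P}(V^*)$ of this right random walk. Strong irreducibility, proximality, and the moment-one condition are all preserved under transposition (the first by annihilators, the second since $g^t$ and $g$ share eigenvalues), so the hypotheses transfer to $\mu^t$.

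For the weak form of item 4 (convergence of $v^+_{R_n}$ exponentially to a random $Z \sim \nu$), I would use Furstenberg's martingale argument: the sequence $\mu_n := R_n \cdot \nu$ of pushforwards of the unique $\mu$-stationary measure $\nu$ is a martingale of probability measures on $\PV$ (by $\mu$-stationarity of $\nu$, $\E[\mu_n \mid \mathcal{F}_{n-1}] = \mu_{n-1}$), and hence converges weakly almost surely to some $\mu_\infty$. Lemma \ref{algebra}(ii) yields $\mu_n\bigl(B(v^+_{R_n}, (a_2(R_n)/a_1(R_n))/\epsilon)\bigr) \geq \nu(U_\epsilon)$; combining with Lemma \ref{p0} ($\nu(U_\epsilon) \to 1$ as $\epsilon \to 0$, uniformly over projective hyperplanes) and Guivarc'h--Raugi's simplicity ($a_2(R_n)/a_1(R_n) \leq e^{-\gamma n}$ in probability with $\gamma > 0$) forces $\mu_\infty$ to be a Dirac mass at a random $Z$, with $v^+_{R_n} \to Z$ at exponential rate. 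Stationarity identifies the law of $Z$ as $\nu$. Applying this to $\mu^t$ yields item 5 (with $\nu^*$ denoting the $\mu^t$-stationary measure on $\PVs$). For the uniformity in $x$ in item 4, Lemma \ref{algebra}(ii) gives $\delta(R_n \cdot x, v^+_{R_n}) \leq (a_2(R_n)/a_1(R_n))/\delta(x, H_{R_n}^-)$; the denominator is bounded below uniformly in $x$ by $e^{-\beta n}$ ($\beta < \gamma$) with probability $\to 1$, by item 5 (which places $[f_{R_n}]$ within $e^{-Cn}$ of $Z^* \sim \nu^*$, since $R_n$ and $L_n$ have the same distribution) together with Lemma \ref{p0} applied to $\nu^*$.

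Items 1 and 3 then follow directly from Lemma \ref{algebra}: inequality (iii) gives $\|L_n v\|/\|L_n\| \geq \delta([v], H_{L_n}^-)$ and (ii) gives $\delta(R_n \cdot x, v^+_{R_n}) \leq (a_2/a_1)/\delta(x, H_{R_n}^-)$; in each case the projective distance to the random repelling hyperplane is bounded below uniformly in $v$ (resp.\ $x$) by $e^{-\beta n}$ with probability $\to 1$, by item 5 and Lemma \ref{p0} exactly as in the previous paragraph. Item 2 follows from the triangle inequality $\delta(L_n \cdot x, L_n \cdot y) \leq \delta(L_n \cdot x, v^+_{L_n}) + \delta(v^+_{L_n}, L_n \cdot y)$ combined with the analogue of item 3 applied to $L_n$ in place of $R_n$ (the same distributional estimates are available).

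The main obstacle is the martingale step in item 4: Lemma \ref{p0} supplies only qualitative regularity of $\nu$ under a moment of order one, so the exponential constant $C$ obtained is non-explicit, and care is needed to extract an exponential rate for $\delta(v^+_{R_n}, Z)$ by combining this qualitative estimate with Guivarc'h--Raugi's quantitative rate on $a_2/a_1$ within the concentration argument for $\mu_\infty$. This qualitative control is, however, precisely what is needed in the subsequent applications.
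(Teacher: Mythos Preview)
Your approach is correct but follows a genuinely different route from the paper's proof. The paper proceeds in the order $1\to 2\to 3\to 4\to 5$: item~1 is obtained directly from the $L^1$ convergence $\frac{1}{n}\E\ln\frac{\|L_n\|\,\|v\|}{\|L_n v\|}\to 0$ (cited from Bougerol--Lacroix) together with Markov's inequality; item~2 follows from the deterministic bound $\delta(g\cdot x,g\cdot y)\le \frac{\|\wedge^2 g\|}{\|g\|^2}\cdot\frac{\|g\|^2}{\|gv\|\,\|gw\|}$ combined with item~1 and Guivarc'h--Raugi; item~3 from items~1 and~2; item~4 from the known convergence in probability $R_n\cdot x\to Z$ (again cited from Bougerol--Lacroix) via Fatou's lemma and item~2; and item~5 by duality. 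Your route instead starts from Furstenberg's measure-valued martingale $R_n\cdot\nu$ to establish the weak form of item~4, passes to item~5 by duality, and then feeds item~5 together with Lemma~\ref{p0} (applied to $\nu^*$ on $\PVs$) back into the deterministic inequalities from the proof of Lemma~\ref{algebra} to recover items~1, 3, the uniform part of item~4, and finally item~2.

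What your approach buys is self-containment: you avoid both black-box citations to Bougerol--Lacroix and derive everything from the martingale limit, Guivarc'h--Raugi simplicity, and the regularity Lemma~\ref{p0}. The paper's approach is shorter and more linear, at the price of those external inputs. One point worth making explicit in your write-up, since you correctly flag it as the main obstacle: once $\mu_\infty=\delta_Z$ is established, the martingale property yields $\mu_n(A)=\p(Z\in A\mid\mathcal F_n)$ for Borel $A$; applying this with $A=B(v_{R_n}^+,\,e^{-\gamma n}/\epsilon)$ and using $\mu_n(A)\ge 1-\eta(\epsilon)$ on the event $\{a_2(R_n)/a_1(R_n)\le e^{-\gamma n}\}$ gives $\p\bigl(\delta(v_{R_n}^+,Z)\ge e^{-\gamma n}/\epsilon\bigr)\le \eta(\epsilon)+o(1)$, and letting $\epsilon=\epsilon_n\to 0$ slowly delivers the exponential rate you need.
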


  \begin{proof}
 We will use in all the proof that if $(A_n)_n$ and $(B_n)_n$ are two sequences of subsets of $\Omega$ such that $\p(A_n)=1-o(1)$ and $\p(B_n)=1-o(1)$, then $\p(A_n \cap B_n)=1-o(1)$. 
\begin{enumerate}
\item By \cite[Corollary 3.4  item (iii)]{bougerol-lacroix}, we know that for any sequence  $(v_n)_n$ in $V$ of norm one, 
$\frac{1}{n} \E \left(\ln ||L_n v_n||\right)\underset{n\rightarrow +\infty}{\longrightarrow} \lambda_1$. Hence   $\frac{1}{n} \E\left( \ln \frac{||L_n v_n||}{||L_n||}\right)  \underset{n\rightarrow +\infty}{\longrightarrow} 0$ for every such sequence $(v_n)_n$.   Thus, 
  $$\sup_{[v]\in \textrm{P}(V)}{\frac{1}{n}\E \left( \ln{\frac{||L_n||\,||v||}{||L_n v||}}\right) }\underset{n\rightarrow +\infty}{\longrightarrow}  0.$$
It is enough now to apply Markov's inequality in order to have the estimate of 1. 
\item Let $x=[v],y=[w]\in \textrm{P}(V)$. Without loss of generality $||v||=||w||=1$. We have by the definition of the metric $\delta$: 

\begin{equation}\forall g\in \textrm{GL}(V),\, \delta(g \cdot x ,g \cdot y)\leq \frac{||\bigwedge^2 g||}{ ||g||^2}\,\times \,\frac{||g||^2}{||g v||| \,||g w||}.\label{pol}\end{equation}
On the one hand, we know by the Guivarc'h-Raugi theorem \cite{guivarch-raugi} that  with our assumptions on the semi-group generated by the support of $\mu$, the first Lyapunov exponent is simple. Hence the following almost sure convergence holds:
 $$\frac{1}{n} \ln \frac{|| \bigwedge ^2 L_n||}{||L_n||^2} \underset{n\rightarrow +\infty}{\longrightarrow}  \lambda_2-\lambda_1<0.$$
We deduce that for $C:=(\lambda_1-\lambda_2)/2>0$, we have that 
\begin{equation}\p \left(  \frac{||\bigwedge ^2 L_n||}{||L_n||^2} \leq  \exp(-Cn) \right) = 1-o(1).\label{pol1}\end{equation}
On the other  hand, applying estimate 1.~for $\epsilon=C/4$ to  get that \begin{equation}\p \left( \frac{||L_n||^2}{||L_n v ||\,||L_n w ||} \leq \exp(Cn/2)\right) = 1-o(1).\label{pol2}\end{equation}
 Moreover the previous estimate is uniform in $v$ and $w$. 
Combining (\ref{pol}), (\ref{pol1}) and  (\ref{pol2}) and the  remark at the beginning of the proof, we get the desired estimate. 
\item Let  $x=[v]\in \textrm{P}(V)$ and $g\in \textrm{GL}(V)$. Observe  that $$\delta(v_{g}^+, g \cdot x)= \delta(e_1, a_g u_g \cdot x)= O\left(\frac{a_2(g)}{a_1(g)}\right) \times \frac{||g||\,||v||}{||g v||}.$$
 It is enough now to apply estimates 1.~and 2. 

\item   Let  $x\in \textrm{P}(V)$.  We know from \cite[Theorem 4.3]{bougerol-lacroix} that there exists a random variable $Z$ on $\textrm{P}(V)$  independent of $x$ of law the unique $\mu$-stationary probability measure on $\textrm{P}(V)$ such that the sequence of random variables 
$(R_n \cdot x)_{n\in \N}$ converges in probability to $Z$. Hence, there exists a non random subsequence $(n_k)_{k\in \N}$ such that   $(R_{n_k} \cdot x)_{k\in \N}$   converges almost surely to $Z$.  Fix now $n\in \N$ and denote by $C$ the positive constant given in estimate 2. On the one hand, we have by   Fatou's lemma that: 
\begin{equation}\p\left(\delta(R_n \cdot x, Z)> \exp(-Cn)\right)\leq \liminf_{k\longrightarrow + \infty}{\p(\delta(R_n \cdot x, R_{n_k} \cdot x)>  \exp(-Cn)) }.\label{ppl}\end{equation}
On the other hand, writing $R_{n_k} \cdot x=R_n \cdot (X_{n+1} \cdots X_{n_k}) \cdot x$ for all $n_k>n$ and using  the independence of the $X_i$'s, we get that for all $n_k>n$, 
 
\begin{equation}\p\left(\delta(R_n \cdot x , R_{n_k} \cdot x)>\exp(-Cn)\right)\leq \sup_{a,b \in \textrm{P}(V)}{\p\left(\delta(R_n \cdot a, R_n \cdot b)> \exp(-Cn)\right)}.\label{ppk}\end{equation}

Combining (\ref{ppl}) and (\ref{ppk}), we deduce that for every $n\in \N$, 
 $$ \p\left(\delta(R_n \cdot x, Z)> \exp(-Cn)\right) \leq \sup_{a,b\in \textrm{P}(V)}{\p\left(\delta(R_n \cdot a, R_n \cdot b)> \exp(-Cn)\right)}.$$
By estimate 2.~ and the fact that $R_n$ and $L_n$ have the same law for every $n\in \N$, we deduce that the  quantity above goes to zero as $n$ tends to infinity. This proves the first inequality. The second estimate then follows   item 3. 

\item Apply the previous estimate for the probability measure $\mu^t$ which satisfies the same  assumptions as $\mu$ (see for instance \cite[Chapter III, Lemma 3.3]{bougerol-lacroix}). 
 \end{enumerate}
\end{proof}

 \begin{remark} In parts 1,2,3,4  the speed of convergence is 
 \begin{itemize}
 \item exponential when $\mu$ has an exponential moment \cite{bougerol-lacroix}, \cite{Guivarch3}, \cite{aoun1}.
 \item of order $C_n$ for some sequence $(C_n)_{n\in \N}$ that satisfies   $\sum_n{n^{p-2} C_n}<+\infty$,   when $\mu$ has a moment of order $p>1$  \cite{benoist-quint-tcllineaire}. \end{itemize}
\end{remark}
 
 \begin{remark} The role of $R_n$ and $L_n$ is interchangeable in the statements of Proposition \ref{prop-estimates} except for estimates 4 and 5 where the result fails if we interchange $R_n$ and $L_n$. \label{r12}  \end{remark}

    \section{End of the proof of Theorem \ref{key}}
  The end of the proof is based on a usual trick about the asymptotic independence of the right and the left random walk. We refer for instance to  \cite[Section 2.3]{tutubalin},  \cite[Section 6]{vircer}, \cite[Theorem 1.5]{goldsheid-guivarch} and  \cite[Lemme 4.3]{aoun3} for a general statement. 
  \begin{proof}[Proof of Theorem \ref{key}:]  
Let $\mathcal{H}$ be the set of all projective hyperplanes of $\PV$. For every $H=\ker(f)$ and $H'=\ker(f')$ in $\mathcal{H}$, we denote for simplicity 
 $\delta(H,H'):=\delta([f],[f'])$. By estimates 3.~,  4.~ and 5.~ of Proposition \ref{prop-estimates}, there exist a random variable $Z\in \textrm{P}(V)$,   $C>0$, $n_0\in \N$
such that for every $n\geq n_0$: 
\begin{enumerate}
\item[i.] $$\p\left( \delta(v_{X_1 \cdots X_n}^+, Z) \geq  \exp(-Cn)\right)=o(1).$$
In particular, 
 $$\p\left( \delta(v_{X_1 \cdots X_n}^+, v_{X_1\cdots X_{\lfloor n/2 \rfloor}}^+) \geq  2\exp(-Cn)\right)=o(1).$$
\item[ii.] $$\p\left( \delta(H_{X_1\cdots X_n}^{-} ,  H_{X_{\lfloor n/2 \rfloor +1} \cdots X_n}^{-})\geq \exp(-Cn)\right) = \p\left( \delta( H_{X_n\cdots X_1}^{-} ,  H_{X_{n-\lfloor n/2 \rfloor } \cdots X_1}^{-}) \geq  \exp(-Cn) \right) = o(1).$$
\end{enumerate}
\noindent  The fact that the $X_i$'s are i.i.d~ is used in the left equality above, while the right one follows from Estimate 5. of Proposition \ref{prop-estimates}. 
 Fix now $\epsilon>0$.  We deduce that, for $n\geq n_0$, 
 \begin{eqnarray}
\p\left(\delta(v_{X_1\cdots X_n}^+, H_{X_1\cdots X_n}^{-}) \leq \epsilon \right) & \leq  & o(1)+ 
   \p\left( \delta\left(v_{X_1\cdots X_{\lfloor n/2 \rfloor}}^+, H_{X_1\cdots X_n}^{-} \right) \leq  \epsilon + 2\exp(-Cn)\right)\,\,\,\,\,\,\,\, \label{zz0}   \\
   & \leq &   o(1)+ \p\left( \delta\left(v_{X_1\cdots X_{\lfloor n/2 \rfloor}}^+, H_{X_{\lfloor n/2 \rfloor +1}\cdots X_n}^{-} \right) \leq \epsilon + 4\exp(-Cn)\right)\,\,\,\,\,\,\,\,\,\,\,\,\,\,\,\,\label{zz}\\
   & = & o(1) + \sup_{H \in \mathcal{H}}{\p\left(\delta(v_{X_1\cdots X_{\lfloor n/2 \rfloor}}^+, H)\leq \epsilon  + 4\exp(-Cn)\right)}\,\,\,\,\,\,\,\,\label{zz1}\\
  & \leq & o(1)+ \sup_{H \in \mathcal{H}}{\p\left(\delta(Z, H)\leq \epsilon + 5\exp(-Cn)\right)}. \,\,\,\,\,\,\,\,  \label{zz2}\end{eqnarray}
Estimates  \eqref{zz0} and \eqref{zz2} follow  immediately from estimate i.~ at the beginning of the proof.   In line \eqref{zz}, we used estimate ii.~ above and identity \eqref{useful}. 
  Identity  \eqref{zz1} is due to the independence of the $(X_i)_i$'s.\\\
  Observe now that, by compactness of $\mathcal{H}$,   the following convergence holds for   $\epsilon>0$ fixed:
  $$\sup_{H \in \mathcal{H}}{\p\left(\delta(Z, H)\leq \epsilon + 5\exp(-Cn)\right)} \underset{n\rightarrow +
  \infty}{\longrightarrow} \sup_{H \in \mathcal{H}}{\p\left(\delta(Z, H)\leq \epsilon\right)}.$$
  Since the law of the random variable $Z$ is the unique   $\mu$-stationary probability measure $\nu$,  we deduce from   \eqref{zz2} that for every $\epsilon>0$,  
\begin{equation}\limsup_{n\rightarrow +\infty}{\p\left(\delta(v_{X_1\cdots X_n}^+, H_{X_1\cdots X_n}^{-}) \leq \epsilon \right) }\leq \sup_{H \in \mathcal{H}}\nu\left\{x\in \PV; \delta(x,H)\leq \epsilon\right\}.\label{proofo}\end{equation}
  Applying Lemma \ref{p0} ends the proof. 
  \end{proof}

    The proof of Theorem \ref{main} (combine \eqref{toprox}, \eqref{proofo0} and \eqref{proofo})  shows the following result which   makes clear the link between the regularity of stationary measures on projective space (see Remark \ref{r11}) and the speed of convergence   as $\epsilon \rightarrow 0$ of the function $\epsilon \mapsto \limsup_{n \rightarrow +\infty}{\p
 \left( \frac{\rho(L_n)}{||L_n||} \leq \epsilon \right)}$. 

  \begin{theo}
  Let $\mu$ be a probability measure on $\textrm{GL}(V)$ such that $\mu$ has a moment of order one and such that 
  $\Gamma_{\mu}$ is strongly irreducible. Let $p$ be the proximality index
   of $\Gamma_{\mu}$. Then   there exists $C=C(\Gamma_{\mu})\geq 1$ such that 
   for every $\epsilon>0$, 
 $$\limsup_{n\rightarrow +\infty}{\p\left( \frac{\rho(L_n)}{||L_n||} \leq  \epsilon  \right)}\leq   \sup_{H \,\textrm{projective hyperplane}}
 \nu\left\{x\in \textrm{P}(V); \delta(x,H)\leq 2 C \epsilon^p\right\}  \underset{\epsilon \to 0}{\big\downarrow} 0,$$
 where  $\nu$ is the unique $\mu$-stationary measure on the projective space of some irreducible and proximal representation\footnote{Assume we are working in   characteristic zero. 
 Then $\nu$ is the unique $\mu$-stationary probability measure on   the projective space  $\textrm{P}(\bigwedge^ p V)$
 whose cocycle average is maximal (i.e. 
 $\iint {\ln \frac{||\bigwedge^p g\, v||}{||v||}\, d\mu(g)d\nu([v])}=p\lambda_1(\mu)$).} of $\Gamma_{\mu}$.  $\qed$
\label{mainprecise}\end{theo}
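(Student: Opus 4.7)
The plan is to assemble the three intermediate inequalities already derived inside the proofs of Theorems \ref{main} and \ref{key}, tracking the dependence on the proximality index $p$. No new probabilistic input is required beyond what was done in Sections 2 and 4; this is a quantitative rereading of those arguments.

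First I would reduce to the strongly irreducible and proximal case. Letting $p$ be the proximality index of $\Gamma_\mu$ and applying \cite[Lemma 4.13]{benoist-quint-tcllineaire}, I obtain a $\Gamma_\mu$-invariant subspace $W\subseteq \bigwedge^p V$ (replaced by $W/U$ in positive characteristic) on which the induced representation $\pi:\Gamma_\mu\to\textrm{GL}(W)$ is strongly irreducible and proximal, together with a constant $C=C(\Gamma_\mu)\geq 1$ such that $||g||^p\leq C\,||\pi(g)||$ for every $g\in\Gamma_\mu$. Combined with $\rho(\pi(g))\leq \rho(g)^p$, this is precisely \eqref{toprox}:
$$\p\!\left(\frac{\rho(L_n)}{||L_n||}\leq \epsilon\right)\leq \p\!\left(\frac{\rho(\pi(L_n))}{||\pi(L_n)||}\leq C\epsilon^p\right).$$

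Next, applied to the i-p representation $\pi$, the argument used to establish \eqref{proofo0} — introducing the event
$$\Omega_n=\bigl\{\delta(v_{\pi(L_n)}^+,H_{\pi(L_n)}^-)^2>4\,a_2(\pi(L_n))/a_1(\pi(L_n))\bigr\},$$
whose probability tends to $1$ by Guivarc'h--Raugi's simplicity for $\pi(\mu)$ combined with Theorem \ref{key} applied to $\pi(\mu)$, and then invoking Lemma \ref{algebra} — gives
$$\limsup_{n\to+\infty}\p\!\left(\frac{\rho(\pi(L_n))}{||\pi(L_n)||}\leq C\epsilon^p\right)\leq \limsup_{n\to+\infty}\p\!\left(\delta(v_{\pi(L_n)}^+,H_{\pi(L_n)}^-)\leq 2C\epsilon^p\right).$$
The asymptotic-independence argument from the proof of Theorem \ref{key}, i.e.\ splitting $L_n=(X_1\cdots X_{\lfloor n/2\rfloor})(X_{\lfloor n/2\rfloor+1}\cdots X_n)$ and invoking parts 3--5 of Proposition \ref{prop-estimates}, is exactly the inequality \eqref{proofo} for the i-p pair $(\pi(\mu),\nu)$:
$$\limsup_{n\to+\infty}\p\!\left(\delta(v_{\pi(L_n)}^+,H_{\pi(L_n)}^-)\leq 2C\epsilon^p\right)\leq \sup_{H}\nu\bigl\{x\in\textrm{P}(W);\ \delta(x,H)\leq 2C\epsilon^p\bigr\},$$
where $\nu$ is the unique $\pi(\mu)$-stationary probability measure on $\textrm{P}(W)$. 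Concatenating the three inequalities yields the claimed bound, and Lemma \ref{p0} applied to $(\pi(\mu),\nu)$ shows that the right-hand side decreases to $0$ as $\epsilon\to 0$.

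There is no real obstacle: every ingredient has been proved in the preceding sections and the theorem is merely the quantitative version of their combination. The only point of care is the identification of $\nu$ in characteristic zero as the unique $\pi(\mu)$-stationary probability measure on $\textrm{P}(W)$ (equivalently, the one whose cocycle average equals $p\,\lambda_1(\mu)$), which follows from Guivarc'h--Raugi's uniqueness statement for strongly irreducible and proximal actions.
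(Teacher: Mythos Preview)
Your proposal is correct and follows exactly the paper's own argument: the theorem is explicitly stated there as the combination of \eqref{toprox}, \eqref{proofo0} and \eqref{proofo}, together with Lemma \ref{p0} for the final decrease to zero. You have reproduced each of these three steps faithfully, including the reduction to the i-p representation $\pi$ via \cite[Lemma 4.13]{benoist-quint-tcllineaire} and the identification of $\nu$ as the unique $\pi(\mu)$-stationary measure on $\textrm{P}(W)$.
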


 We end by justifying Remark \ref{rem-strongly}.  The setting is borrowed from \cite[Example 3.5]{breuillard-cagri}.

  \begin{example}\label{notconv}
Let $\lambda>1$, $\sigma=\begin{pmatrix}0& -1\\
1 & 0\end{pmatrix}$ and $a=\begin{pmatrix}\lambda & 0\\
0& \lambda^{-1}\end{pmatrix}$. Let   $\mu$ be a probability measure on   $S:=\{\sigma, \sigma a\}$ with full support, $\theta:=\mu(\sigma a)\in (0,1)$, $(X_i)_{i\in \N}$ a sequence of i.i.d~random variables with law $\mu$ and consider the random walk $L_n=X_n\cdots X_1$ on $\textrm{SL}_2(\R)$ with respect to $\mu$. We claim that $\lambda_1(\mu)=0$, $\frac{\ln \rho(L_n)}{n}\to 0$ almost surely  but that  $\frac{\ln \rho(L_{n})}{\sqrt{n}}$ does not converge in distribution. 
On the contrary,     $\frac{\ln ||L_{n}||}{\sqrt{n}}$ does converge in distribution but to the maximum of two (dependent) Gaussian distributions. The semi-group $\Gamma_{\mu}$ is actually a subgroup of $\textrm{SL}_2(\R)$ and it is irreducible but not strongly irreducible 
(as the union of the coordinate axis of $\R^2$ is $\Gamma_{\mu}$-invariant). Let us check our claims. Denote by 
 $H\subset \Gamma_{\mu}$   the subgroup of diagonal matrices.   Note that   $H$ is a  subgroup of $\Gamma_{\mu}$ of index two and that the matrices in the coset $\sigma H$ have spectral radius equal to one. On the one hand, since $\mu$ is supported in the coset $\sigma H$, we deduce that $L_{2n+1}\in \sigma H$ for every $n$ and therefore that $\rho(L_{2n+1})=1$ for every $n$. In particular, the only possible limiting distribution of  $\frac{\ln \rho(L_{n})}{\sqrt{n}}$ is the Dirac mass at $0$.  On the other hand,    writing $X_i=\sigma a^{\epsilon_i}$ with   $(\epsilon_i)_{i\in \N}$ a sequence of independent
    Bernoulli random variables $\mathcal{B}(\theta)$, and using the relation $a^{k}\sigma=\sigma a^{-k}$ for $k\in \Z$, we get for $Y_i:=\epsilon_{2i-1}-\epsilon_{2i}$, 
 $$L_{2n}=(-1)^n a^{S_n}\,\,\,\,\textrm{with}\,\,\,\,S_n=\sum_{i=1}^n{Y_i}.$$
 The sequence $(Y_i)_{i\in \N}$ is a sequence of i.i.d.~random variables  on $\{-1,0,1\}$   with $\p(Y_i=-1)=\p(Y_i=1)=\theta (1-\theta)$, so that $S_n$ is a centered random walk on $\Z$. Thus, by the classical central limit theorem,  $\frac{\ln \rho(L_{2n})}{\sqrt{n}}=\ln \lambda\, \frac{|S_n|}{\sqrt{n}}$ converges in distribution to $|X|$ with    $X$ being a non degenerate Gaussian distribution. Thus $\frac{\ln \rho(L_n)}{\sqrt{n}}$ does not converge in distribution. The other claims follow readily from the discussions above. 
 \end{example}

\end{document}